\newcommand{\rset}{\mathbb{R}}
\newcommand{\R}{\mathbb{R}}
\newcommand{\PP}{\mathbb{P}}
\newcommand{\nset}{\mathbb{N}}
\newcommand{\ep}{\varepsilon}
\newcommand{\ind}{\mathbf{1}}
\newcommand{\e}{\mathbb{E}}
\newcommand{\p}{\mathbb{P}}
\newcommand{\cf}{\mathcal{F}}
\newcommand{\eps}{\varepsilon}
\newcommand{\ti}{\tilde}
\newcommand{\les}{\hspace{-2em}}
\newcommand{\equa}{\begin{eqnarray*}}
\newcommand{\tion}{\end{eqnarray*}}
\newcommand{\equal}{\begin{eqnarray}}
\newcommand{\tionl}{\end{eqnarray}}
\newcommand{\pull}{\hspace{-2em}}
\newcommand{\half}{\frac{1}{2}}
\newcommand{\pr}[1]{\left( #1 \right)}
\newcommand{\E}{\mathbb{E}}
\newcommand{\abs}[1]{\left|#1\right|}
\newcommand{\cG}{\mathcal{G}}
\newcommand{\less}{\hspace*{-4em}}
\theoremstyle{plain}
\newtheorem{thm}{Theorem}[section]
\newtheorem{lemme}[thm]{Lemma}
\newtheorem{prop}[thm]{Proposition}
\newtheorem{hypo}[thm]{Assumption}
\theoremstyle{definition}
\theoremstyle{plain}
\newtheorem{rem}[thm]{Remark}
\title{{\bf Random walk approximation of BSDEs with Hölder continuous terminal
  condition}\\  }
\author{Christel Geiss$^1$, Céline Labart$^2$, Antti Luoto$^3$ \\
{
}}
\date{}
\begin{document}
\allowdisplaybreaks
\maketitle
\begin{abstract}
In this paper we consider the random walk approximation of the solution
 of a Markovian BSDE whose terminal condition is a locally Hölder continuous
 function of the Brownian motion. We
 state the rate of the $L_2$-convergence of the approximated solution to the
 true one. The proof relies in part on growth and smoothness properties of the
 solution $u$ of the associated PDE. Here we improve existing results by showing some
 properties of the second derivative of $u$ in space.                 

\end{abstract}

{\bf Keywords :}  Backward stochastic differential equations, numerical scheme, random
  walk approximation, speed of convergence\\

{\bf MSC codes :} 65C30  60H35 60G50 65G99

{\noindent
\footnotetext[1]{Department of Mathematics and Statistics, P.O.Box 35 (MaD), FI-40014 University of Jyvaskyla, Finland \\ \hspace*{1.5em}{\tt christel.geiss{\rm@}jyu.fi}}
\footnotetext[2]{Univ. Grenoble Alpes, Univ. Savoie Mont Blanc, CNRS, LAMA,
  73000 Chambéry, France \\ \hspace*{1.5em}  {\tt celine.labart{\rm@}univ-smb.fr}}}
\footnotetext[3]{Department of Mathematics and Statistics, P.O.Box 35 (MaD), FI-40014 University of Jyvaskyla, Finland \\ \hspace*{1.5em}{\tt antti.k.luoto{\rm@}student.jyu.fi}}
\section{Introduction}
Let  $(\Omega, \cf, \PP)$ be a complete probability space carrying the standard
Brownian motion $B= (B_t)_{t \ge 0}$ and assume $(\cf_t)_{t\ge 0}$ is the
augmented natural filtration. We consider the following backward stochastic
differential equation (BSDE for short)
\begin{align}\label{BSDE}
Y_s&= g( B_T) + \int_s^T f(r,B_r,Y_r,Z_r)dr - \int_s^T Z_r dB_r,  \quad \quad 0 \leq s  \leq T,
\end{align}
where $f$  is Lipschitz continuous
and $g$ is a  locally $\alpha$-Hölder continuous and
polynomially bounded function (see \eqref{eq5}). In this paper we are
interested in the $L_2$-convergence of the numerical approximation of \eqref{BSDE} by using a random
walk. First results dealing with the numerical approximation of BSDEs  
date back to
the late 1990s. Bally (see \cite{Bally_97}) was the first to consider this problem
by introducing random discretization, namely the jump times of a Poisson
process. In his PhD thesis, Chevance (see \cite{Chevance_97}) proposed the following 
discretization
\begin{align*}
  y_k=\E(y_{k+1}+hf(y_{k+1})|\mathcal{F}^n_k), \qquad k=n-1,\cdots,0,\quad n \in \nset^*
\end{align*}
and proved the convergence of $(Y^n_t)_t:=(y_{[t/h]})_t$ to $Y$.
 At the same time,
Coquet, Mackevi\v{c}ius and Mémin \cite{CMM_99} proved the convergence of $Y^n$ by using convergence of filtrations, still
in the case of a generator independent from $z$. The general case ($f$ depends
on $z$, terminal condition $\xi \in L_2$) has been studied
by Briand, Delyon and Mémin (see \cite{Briand-etal}). In that paper the
authors define 
an approximated solution $(Y^n,Z^n)$  based on random walk and prove weak convergence to $(Y,Z)$ using convergence of
filtrations. We also refer to \cite{MaProMartTor}, \cite{MarMarTor},
\cite{MeminPengXu}, \cite{PengXu} for other numerical methods for BSDEs which use a 
random walk approach. The rate of
convergence of this method was left as an open problem. 

Introducing instead of random walk an approach based on the dynamic 
programming equation, Bouchard and Touzi in \cite{BT04} and Zhang in \cite{Z04}
managed to establish a rate of convergence. However, to be fully implementable, this
algorithm requires to have a good approximation of its associated conditional
expectation. For this, various methods have been developed (see \cite{GLW05},
\cite{CMT10}, \cite{CGT17}). Forward methods have also been introduced to
approximate \eqref{BSDE} : a branching diffusion method (see
\cite{HLTT13}), a multilevel Picard approximation (see \cite{WHJK17}) and Wiener
chaos expansion (see \cite{BL14}). Many extensions of \eqref{BSDE}
have also been considered : high order schemes (see \cite{Cha14},
\cite{CC14}), schemes for reflected BSDEs (see \cite{BP03}, \cite{CR16}), for
fully-coupled BSDEs (see \cite{Delarue_and_Menozzi}, \cite{BZ08}), for quadratic BSDEs (see
\cite{CR15}), for BSDEs with jumps (see \cite{GL16}) and for McKean-Vlasov BSDEs
(see \cite{A15}, \cite{CT15}, \cite{CCD17}).\\
 From a numerical point
of view, the random walk is of course not competitive with recent methods
listed above. We emphasize that the aim of this paper is to give the
convergence rate of the initial method based on random walk, which, to the best of our knowledge, has not been done so far.

As in \cite{Briand-etal}, let us introduce the following
approximation of $B$, based on a random walk:
$$B^n_{t} = \sqrt{h} \sum_{i=1}^{[t/h] }\varepsilon_i, \quad \quad 0 \leq t
\leq T,$$
where $h= \tfrac{T}{n}$ ($n \in \nset^*$) and $(\varepsilon_i)_{i=1,2, \dots}$ is a sequence of
i.i.d.~Rademacher random variables. Consider the following approximated solution $(Y^n,Z^n)$ of $(Y,Z)$
\begin{align}\label{discrete-BSDE-sums}
  Y^n_{t_k} &= g(B^n_T) + h\sum_{m=k}^{n-1}
              f(t_{m+1},B^n_{t_m},Y^n_{t_m},Z^n_{t_m}) -
              \sqrt{h}\sum_{m=k}^{n-1}Z^n_{t_m} \ep_{m+1}, \quad 0 \leq k \leq
              n-1.
\end{align}
The main result of our paper gives the rate of convergence in $L_2$-norm of
$Y^n_{v}-Y_{v}$ and $Z^n_v-Z_v$ for each $v \in [0,T)$ (see Theorem
\ref{the-Brownian-motion-result}). Basically, we get that the $L_2$-norm of
the error on $Y$ is of order $h^{\frac{\alpha}{4}}$ and the $L_2$-norm of
the error on $Z$ is of order $\frac{h^{\frac{\alpha}{4}}}{\sqrt{T-v}}.$ The proof of this result is based on several
ingredients. In particular, we need some estimates on the bound of the first and
second derivatives of the solution of the 
PDE associated to the BSDE \eqref{BSDE}. We establish these bounds in the case
of a forward backward SDE (FBSDE for short) whose terminal condition satisfies the H\"older continuity condition \eqref{eq5}. This
result extends Zhang \cite[Theorem 3.2]{ZhangII}.\\

The rest of the paper is organized as follows. Section \ref{sect:preliminaries}
introduces notations, assumptions and the representation for $Z$ and $Z^n$
based on the Malliavin weights. Section \ref{sect:main_result} states the rate
of convergence of the error on $Y$ and $Z$ in $L_2$-norm, which is the main
result of the paper. Section \ref{sect:numerics} presents numerical
simulations and Section \ref{sect:PDE_FBSDE} recalls some properties of 
Malliavin weights, of the regularity of solutions to FBSDEs with a locally Hölder continuous terminal condition function and 
states some properties of the solutions to the PDEs associated to these
FBSDEs.

\section{Preliminaries}\label{sect:preliminaries}
This section is dedicated to notations, assumptions and the representation of
$Z$ and $Z^n$ using the Malliavin weights. \smallskip

{\bf Notation:}
\begin{itemize}
\item $\mathcal{G}_k := \sigma(\varepsilon_i: 1 \leq i \leq k)$ and $\mathcal{G}_0 =\{\emptyset, \Omega\}.$  The associated discrete-time random walk $(B^n_{t_k})_{k=0}^n$ is
$(\mathcal{G}_k)_{k=0}^n$-adapted.
\item $\|\cdot\|_p:=\|\cdot\|_{L^p(\p)}$ for $p\ge 1$ and for $p=2$ simply
  $\|\cdot\|$.
constant.
\end{itemize}

\begin{hypo}\label{hypo3}\hfill
\begin{itemize}
\item   $g$ is locally H\"older continuous with order $\alpha \in  (0,1]$ and polynomially bounded ($p_0 \ge 0, C_g>0$)  in the following sense
  \begin{align} \label{eq5}
    \forall (x,y) \in \R^2, \quad |g(x)-g(y)|\le C_g(1+|x|^{p_0}+ |y|^{p_0})|x-y|^{\alpha}.
  \end{align}
\item The function   $ [0,T] \times \R^3: (t,x,y,z) \mapsto f (t,x,y,z)$ satisfies 
   \equal  \label{Lipschitz}
  |f(t,x,y,z) - f(t',x',y',z')| \le L_f(\sqrt{t-t'} + |x-x'| + |y-y'|+|z-z'|).
  \tionl   
\end{itemize}
\end{hypo}
Notice that \eqref{eq5} implies
\equal \label{Psi}
|g(x)| \le K(1+|x|^{p_0+1}) =:\Psi(x) .
\tionl

In the rest of the paper, the study of the error $(Y^n-Y,Z^n-Z)$ will either
rely on \eqref{discrete-BSDE-sums} or on its integral version:

\begin{align}\label{discrete-BSDE}
  Y^n_s &= g(B^n_T) + \int_{(s,T]} f(r,B^n_{r^-},Y^n_{r^-},Z^n_{r^-})d[B^n, B^n]_r - \int_{(s,T]} Z^n _{r^-} dB^n_r, \quad 0  \le s \leq T,
\end{align} 
where the backward equation \eqref{discrete-BSDE} arises from \eqref{discrete-BSDE-sums} by setting $Y^n_r:=Y^n_{t_m}$ and $Z_r^n :={Z}_{t_m}^n$  for $r \in
[t_m, t_{m+1}).$
For $n$ large enough, \eqref{discrete-BSDE} has a unique solution $({Y}^n,
{Z}^n),$  and $({Y}^n_{t_m}, {Z}^n_{t_m})_{m=0}^{n-1}$ is adapted to the
filtration $(\cG_m)_{m=0}^{n-1}$. Let us now introduce the Malliavin
representations for $Z$ and $Z^n$. They are the cornerstone of
our study of the error on $Z$.

\subsection{Representations for \texorpdfstring{$Z$}{Z} and \texorpdfstring{$Z^n$}{Zn}}

We will use the representation (see Ma and Zhang \cite[Theorem 4.2]{MaZhang})
\begin{align}\label{Z-representation}
   Z_t &= \e_{t} \left( g(B_T) N^t_T + \int_t^T f(s,B_s,Y_s,Z_s) N^t_s
     ds\right), \quad 0 \leq t \leq T,
\end{align}
where $\E_t[\cdot]=\E[\cdot|\mathcal{F}_t],$ and for all $s \in (t,T]$ we have
\equa
  N^t_s:=\frac{B_s-B_t}{s-t}.  \tion

\begin{lemme} Suppose that Assumption \ref{hypo3} holds. Then the process $Z^n$ given by \eqref{discrete-BSDE} has the representation 
\begin{align}  \label{Z-n-representation}
&Z^n_{t_k} = \E_{k} \left (g(B_T^n) \frac{B^n_{t_n} - B^n_{t_k}}{t_n - t_k}\right ) + \E_{k}  \left (h \sum_{m={ k+1}}^{n-1} f( t_{m+1}, B^n_{t_m}, Y^n_{t_m}, 
Z^n_{t_m})\frac{B^n_{t_m} - B^n_{t_k}}{t_m - t_k} \right )
\end{align}
for $k = 0,1, \dots, n-1,$ where $ \E_k [\, \cdot \,] := \E[\, \cdot \, | \mathcal{G}_k].$
\end{lemme}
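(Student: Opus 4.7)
The plan is to establish \eqref{Z-n-representation} by the discrete analogue of the Clark--Ocone / Malliavin-weight identity. First I would isolate $Z^n_{t_k}$ from \eqref{discrete-BSDE-sums} by projecting the equation onto $\ep_{k+1}$; then I would exploit the conditional exchangeability of the Rademacher increments $(\ep_{k+1},\dots,\ep_m)$ given $\cG_k$ to rewrite each resulting conditional expectation with the desired weight $(B^n_{t_m}-B^n_{t_k})/(t_m-t_k)$.

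For the projection step, multiplying \eqref{discrete-BSDE-sums} by $\ep_{k+1}$ and applying $\E_k$ makes $Y^n_{t_k}\ep_{k+1}$ and the $m=k$ generator term vanish (their coefficients are $\cG_k$-measurable and $\E_k[\ep_{k+1}]=0$). In the martingale sum $\sqrt{h}\sum_{m=k}^{n-1}Z^n_{t_m}\ep_{m+1}\ep_{k+1}$, every term with $m>k$ vanishes after conditioning on $\cG_m$, while the $m=k$ term contributes $\sqrt{h}\,Z^n_{t_k}$ because $\ep_{k+1}^2=1$. This produces the preliminary identity
\begin{align*}
\sqrt{h}\,Z^n_{t_k}=\E_k\bigl[g(B^n_T)\ep_{k+1}\bigr]+h\sum_{m=k+1}^{n-1}\E_k\bigl[f(t_{m+1},B^n_{t_m},Y^n_{t_m},Z^n_{t_m})\ep_{k+1}\bigr].
\end{align*}

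To rewrite each $\E_k[\,\cdot\,\ep_{k+1}]$ in the advertised form I need the integrands to be \emph{symmetric} functions of $(\ep_{k+1},\dots,\ep_m)$ given $\cG_k$. The cleanest route is a backward induction establishing the Markov structure $Y^n_{t_m}=u_m(B^n_{t_m})$, $Z^n_{t_m}=v_m(B^n_{t_m})$ for deterministic $u_m,v_m$: starting from $u_n=g$ and projecting
\begin{align*}
Y^n_{t_m}=Y^n_{t_{m+1}}+hf(t_{m+1},B^n_{t_m},Y^n_{t_m},Z^n_{t_m})-\sqrt{h}\,Z^n_{t_m}\ep_{m+1}
\end{align*}
onto $1$ and $\ep_{m+1}$ yields
\begin{align*}
v_m(x)&=\tfrac{1}{2\sqrt{h}}\bigl(u_{m+1}(x+\sqrt{h})-u_{m+1}(x-\sqrt{h})\bigr),\\
u_m(x)&=\tfrac{1}{2}\bigl(u_{m+1}(x+\sqrt{h})+u_{m+1}(x-\sqrt{h})\bigr)+hf(t_{m+1},x,u_m(x),v_m(x)).
\end{align*}
The implicit equation for $u_m$ is uniquely solvable by Banach fixed point as soon as $hL_f<1$, which is exactly the ``$n$ large enough'' assumption. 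I expect this Markov reduction to be the only delicate point; everything else is essentially bookkeeping.

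Once every integrand has the form $F(B^n_{t_m})$ with $F$ deterministic, the argument concludes. Since $B^n_{t_m}-B^n_{t_k}=\sqrt{h}\sum_{i=k+1}^m\ep_i$ is symmetric in $(\ep_{k+1},\dots,\ep_m)$ and so is $F(B^n_{t_m})$, the quantities $\E_k[F(B^n_{t_m})\ep_i]$ coincide for all $i\in\{k+1,\dots,m\}$; averaging in $i$ gives
\begin{align*}
\E_k[F(B^n_{t_m})\ep_{k+1}]=\E_k\!\left[F(B^n_{t_m})\,\frac{1}{m-k}\sum_{i=k+1}^m\ep_i\right]=\sqrt{h}\,\E_k\!\left[F(B^n_{t_m})\,\frac{B^n_{t_m}-B^n_{t_k}}{t_m-t_k}\right].
\end{align*}
Applying this with $F=g$ and $m=n$, and with $F(x)=f(t_{m+1},x,u_m(x),v_m(x))$ for $k+1\le m\le n-1$, then dividing by $\sqrt{h}$, turns the preliminary identity into exactly \eqref{Z-n-representation}.
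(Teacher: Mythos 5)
Your proposal is correct and follows essentially the same route as the paper: project \eqref{discrete-BSDE-sums} onto $\ep_{k+1}$, kill the $\cG_k$-measurable terms, and use the conditional exchangeability of the Rademacher increments to replace $\ep_{k+1}$ by the averaged weight $(B^n_{t_m}-B^n_{t_k})/(t_m-t_k)$. The only difference is that you prove the Markov structure $Y^n_{t_m}=u_m(B^n_{t_m})$, $Z^n_{t_m}=v_m(B^n_{t_m})$ directly by backward induction and a fixed-point argument, whereas the paper simply cites Proposition 5.1 of Briand, Delyon and M\'emin for this fact.
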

\begin{proof}
We multiply equation (\ref{discrete-BSDE-sums}) by $\varepsilon_{k+1}$ and take the conditional expectation with respect to 
$\mathcal{G}_k$. Since $(Y^n_{t_k},Z^n_{t_k})$ is $\mathcal{G}_k$-measurable, it holds for $0 \leq k \leq n-1$ that
\begin{align}\label{cond-exp-three-parts}
& \E_{k} \left (Y^n_{t_k} \varepsilon_{k+1}\right ) \notag\\  & = \E_{k} \left (g(B^n_T) \varepsilon_{k+1} \right ) + h \E_{k} \left (\sum_{m=k}^{n-1} f(t_{m+1},B^n_{t_m}, Y^n_{t_m}, Z^n_{t_m}) \varepsilon_{k+1} \right )  - \sqrt{h} \E_{k}\left ( \sum_{m=k}^{n-1} Z^n_{t_m} \varepsilon_{m+1}\varepsilon_{k+1}\right )  \nonumber\\
&= \sqrt{h} \E_k \left ( g(B^n_T) \frac{B^n_{t_n} - B^n_{t_k}}{t_n-t_k} \right )  + h^{3/2} \sum_{m=k+1}^{n-1}\E_{k} \left ( f(t_{m+1}, B^n_{t_m},Y^n_{t_m}, Z^n_{t_m}) \frac{B^n_{t_m} - B^n_{t_k}}{t_m - t_k}\right )  - \sqrt{h} Z^n_{t_k},
\end{align}
where the l.h.s.~is equal to zero. Indeed, for $m \geq k + 1$, we have 
$$\E_{k}  (Z^n_{t_m}  \varepsilon_{m+1}  \varepsilon_{k+1}) = \E_{k} ( Z^n_{t_m} \varepsilon_{k+1} \E_{m}  \varepsilon_{m+1} )  = 0,$$
and for $m = k$ it holds $\E_{k} (Z^n_{t_k} \varepsilon_{k+1}^2) = Z^n_{t_k}.$ Moreover, the fact that $B_T^n = \sqrt{h} \sum_{m=0}^{n-1} \varepsilon_{m+1}$, 
where $(\varepsilon_{m})_{m=1,2 \dots}$ 
are i.i.d., yields
\begin{align*}
\E_k  \left (  g(B^n_T) \ep_{k+1} \right ) = \E_k \left ( g(B^n_T) \sum_{m=k}^{n-1} \frac{\ep_{k+1}}{n-k}\right ) = \E_k \left ( g(B^n_T) \sum_{m=k}^{n-1} \frac{\ep_{m+1}}{n-k}\right ) = \sqrt{h} \E_k \left ( g(B^n_T) \frac{B^n_{t_n} - B^n_{t_k}}{t_n-t_k} \right ).
\end{align*}
Similarly, for $m \geq k+1$, we get (using \cite[Proposition 5.1]{Briand-etal}, where it is stated that both $Y^n_{t_m}$ and $Z^n_{t_m}$ 
can be represented as  functions of $t_m$ and $B^n_{t_m}$)
\begin{align*}
\E_{k}  \left (  f(t_{m+1}, B^n_{t_m},Y^n_{t_m}, Z^n_{t_m}) \varepsilon_{k+1} \right ) = \sqrt{h} \E_{k} \left (  f(t_{m+1},B^n_{t_m}, Y^n_{t_m}, Z^n_{t_m}) \frac{B^n_{t_m} - B^n_{t_k}}{t_m - t_k} \right ).
\end{align*}
It remains to divide  \eqref{cond-exp-three-parts} by $\sqrt{h}$ and rearrange.
\end{proof}

\section{Main result}\label{sect:main_result}
This section is devoted to the main result of the paper: the rate of the $L_2$-convergence
of $(Y^n,Z^n)$ to $(Y,Z)$. The proof will rely on the fact that the random walk $B^n$ can be constructed from the Brownian motion $B$ by Skorohod embedding.
Let    $\tau_0 :=0$ and define
\[
   \tau_k := \inf\{ t> \tau_{k-1}: |B_t-B_{\tau_{k-1}}| = \sqrt{h} \}, \quad k \ge 1.
\]
Then  $(B_{\tau_{k}}   -B_{\tau_{k-1}})_{k=1}^\infty$ is a sequence of i.i.d. random variables with 
\[\PP(  B_{\tau_{k}}   -B_{\tau_{k-1}} = \pm \sqrt{h})= \tfrac{1}{2},
\]
which means that $\sqrt{h} \ep_k  \stackrel{d}{=}  B_{\tau_{k}}
-B_{\tau_{k-1}}.$ We will  use  this random walk  for our approximation, i.e. we will require
\equal \label{embedded}
   B^n_{t} =  \sum_{k=1}^{[t/h]}  (B_{\tau_{k}}-B_{\tau_{k-1}}), \quad \quad 0 \leq t \leq T.
\tionl
Properties satisfied by $\tau_k$ and $B_{\tau_k}$ are stated in
Lemma \ref{B-difference}. We will denote by $ \e_{\tau_k}$ the conditional expectation w.r.t. $\cf_{\tau_k}.$

\begin{thm} \label{the-Brownian-motion-result} 
 Let   Assumption \ref{hypo3} hold. If  $B^n$ satisfies  \eqref{embedded} then 
we have  (for sufficiently large  $n$) that
 \begin{align*}
&\e |Y_v- Y^n_v |^2 \le    C_0 h^{ \frac{\alpha}{2}}  \quad \text{for}  \quad  v \in [0,T), \\
 & \e |Z_{v} - Z^n_{v}|^2 \le  C_0 \frac{h^{ \frac{\alpha}{2}}}{T-t_k} +  C_1 \frac{h^{ \frac{\alpha}{2}} }{(T-v)^{1-\frac{\alpha}{2}}}\ind_{v \neq t_k } \quad \text{for} \quad v \in [t_k,t_{k+1}), \,\, k=0,...,n-1,
  \end{align*}
where we have the dependencies   $ C_0 = C(T, p_0, L_f, C_g, C^y_{\ref{difference-estimates for Y and Z}},C^z_{\ref{difference-estimates for Y and Z}},K_f, c_{\ref{thm1}}, \alpha),$ 
$C_1=C(T,p_0, C^z_{\ref{difference-estimates for Y and Z}}, \alpha)$ and   $K_f: = \sup_{0\le t\le T} |f(t,0,0,0)|.$ 
\end{thm}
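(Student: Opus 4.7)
The plan is to exploit the Skorohod embedding \eqref{embedded}, which gives $B^n_{t_k} = B_{\tau_k}$, together with the Markov property $Y_t = u(t,B_t)$ of the BSDE solution and the fact from \cite[Proposition 5.1]{Briand-etal} that $Y^n_{t_m}$ and $Z^n_{t_m}$ are deterministic functions of $(t_m, B^n_{t_m})$. For $v \in [t_k,t_{k+1})$, using $Y^n_v = Y^n_{t_k}$, I would decompose
\[
Y_v - Y^n_v \;=\; (Y_v - Y_{\tau_k}) + (Y_{\tau_k} - Y^n_{t_k}),
\]
and analogously $Z_v - Z^n_v = (Z_v - Z_{\tau_k}) + (Z_{\tau_k} - Z^n_{t_k})$ using the Malliavin representations \eqref{Z-representation} and \eqref{Z-n-representation}. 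The scheme-error pieces $\e|Y_{\tau_k}-Y^n_{t_k}|^2$ and $\e|Z_{\tau_k}-Z^n_{t_k}|^2$ are exactly the object of Lemma \ref{difference-estimates for Y and Z} (whose constants $C^y_{\ref{difference-estimates for Y and Z}}, C^z_{\ref{difference-estimates for Y and Z}}$ appear in the statement), yielding bounds $C^y h^{\alpha/2}$ and $C^z h^{\alpha/2}/(T-t_k)$; I treat these as input.

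It then remains to control the time-regularity pieces. For $Y$, the BSDE written between $v$ and $\tau_k$,
\[
Y_v - Y_{\tau_k} = -\int_v^{\tau_k} f(r,B_r,Y_r,Z_r)\,dr + \int_v^{\tau_k} Z_r\,dB_r
\]
(oriented appropriately when $\tau_k<v$), reduces via Cauchy--Schwarz and the It\^o isometry to controlling $\e|\tau_k-v|$ and $\e\!\int_{v\wedge \tau_k}^{v\vee\tau_k}|Z_r|^2\,dr$. The first is $O(h)$ by the Skorohod hitting-time estimates of Lemma \ref{B-difference}; the second is controlled via the a priori bound $\e|Z_r|^2 \lesssim (T-r)^{\alpha-1}$ from Section \ref{sect:PDE_FBSDE}. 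A short case distinction between $T-v \gg h$ and $T-v \lesssim h$ shows that the resulting piece is dominated by $h^{\alpha/2}$ uniformly in $v\in[0,T)$, producing the first bound.

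The $Z$-regularity piece is harder, and it is what generates the additional term $C_1 h^{\alpha/2}/(T-v)^{1-\alpha/2}$. I would apply It\^o's formula to $Z_r = (\partial_x u)(r,B_r)$ between $v$ and $\tau_k$, so the stochastic integral has integrand $(\partial_{xx} u)(r,B_r)$. The crucial input is the new second-derivative bound established in this paper extending \cite[Theorem 3.2]{ZhangII}, of the form $|\partial_{xx}u(r,x)|\lesssim(1+|x|^{p_0+1})(T-r)^{(\alpha-2)/2}$ with finite moments along $B$. Integrating its square over an interval of length $O(h)$ ending near $v$ gives a contribution of order $h(T-v)^{\alpha-2}$; interpolating this with the a priori control $\e|Z_v|^2 \lesssim (T-v)^{\alpha-1}$ produces precisely the exponent $1-\alpha/2$. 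Combining with the grid-point scheme error and absorbing the drift contributions via Cauchy--Schwarz closes the estimate.

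The main obstacle is this last step. One has to deal simultaneously with (i) the randomness of $\tau_k$, including the nontrivial event $\{\tau_k>T\}$ when $k$ is close to $n$, handled via the tail bounds of Lemma \ref{B-difference}; (ii) the blow-up of $\partial_{xx}u$ as $r\uparrow T$; and (iii) the delicate balance between the $O(h)$ interval length and the $(T-v)$-singularity which is what produces the half-power $1-\alpha/2$ rather than $1-\alpha$ or $2-\alpha$. The grid-point estimate in Lemma \ref{difference-estimates for Y and Z} is itself nontrivial (a Gronwall-type backward induction on the discrete BSDE, using \eqref{Z-n-representation}), but as it is stated separately I would not reprove it.
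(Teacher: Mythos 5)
Your proposal does not actually contain a proof of the main estimate: it is circular at the decisive step. You write that the grid-point scheme errors $\e|Y_{\tau_k}-Y^n_{t_k}|^2$ and $\e|Z_{t_k}-Z^n_{t_k}|^2$ are ``exactly the object of Lemma \ref{difference-estimates for Y and Z}'' and may be treated as input. That is a misreading: Theorem \ref{difference-estimates for Y and Z} is a \emph{time-regularity} result for the continuous solution, bounding $\|Y_s-Y_t\|_{L_p(\PP_{t,x})}$ and $\|Z_s-Z_t\|_{L_p(\PP_{t,x})}$; it says nothing about $Y^n$ or $Z^n$, and no lemma of the paper states the grid-point scheme error separately. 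The bounds $Ch^{\alpha/2}$ and $Ch^{\alpha/2}/(T-t_k)$ that you quote as input are precisely the conclusion of the theorem at grid points, so the entire comparison between the continuous BSDE and the random-walk scheme is missing. The actual argument compares \eqref{BSDE} with \eqref{discrete-BSDE-sums}, and \eqref{Z-representation} with \eqref{Z-n-representation}, directly: the generator difference $\e_{t_k}f(s,B_s,Y_s,Z_s)-\e_{\tau_k}f(t_{m+1},B^n_{t_m},Y^n_{t_m},Z^n_{t_m})$ is split into four pieces --- time regularity of $(Y,Z)$ via Theorem \ref{difference-estimates for Y and Z}, the Skorohod coupling error between $B_{t_m}$ and $B_{\tau_m}$ fed through the function $F(t,x)=f(t,x,u(t,x),u_x(t,x))$ whose Lipschitz constant in $x$ blows up like $(T-t)^{-(1-\alpha/2)}$ (this is where the new $u_{xx}$ bound of Theorem \ref{thm1} enters), and a Lipschitz term producing $\|Y_{t_m}-Y^n_{t_m}\|+\|Z_{t_m}-Z^n_{t_m}\|$; the same is done for the Malliavin-weight kernels in the $Z$ representation; and the resulting coupled system is closed by a discrete Gronwall iteration over $m=k,\dots,n-1$ with weight $h/\sqrt{t_{m-k}}$. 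None of this appears in your proposal.

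A secondary issue concerns the extension to non-grid points. You compare $Y_v$ with $Y_{\tau_k}$ at the \emph{random} time $\tau_k$, which forces you to handle the event $\{\tau_k>T\}$ and an It\^o expansion of $u_x(r,B_r)$ involving the singular second derivative; the paper avoids all of this by comparing $Y_v$ and $Z_v$ with $Y_{t_k}$ and $Z_{t_k}$ at the deterministic grid time and invoking Theorem \ref{difference-estimates for Y and Z} once more, which produces the extra term $h^{\alpha/2}(T-v)^{-(1-\alpha/2)}$ in one line via the elementary bound $\int_{t_k}^v(T-r)^{\alpha-2}dr\le \frac{2}{\alpha}h^{\alpha/2}(T-v)^{-(1-\alpha/2)}$. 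Your interpolation heuristic for the exponent $1-\alpha/2$ points in the right direction but is not a proof.
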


\begin{rem}
Theorem \ref{the-Brownian-motion-result}  implies that 
\equa   \sup_{ v \in [0,T)}  \e |Y_v- Y^n_v |^2 \le    C_0 h^{ \frac{\alpha}{2}} \quad \text{ and }  
\quad  \e \int_0^T   |Z_{v} - Z^n_{v}|^2 dv \le C(C_0,C_1, \beta ) \, h^{\beta}  \quad \text{ for } \beta \in (0, \tfrac{\alpha}{2}).
\tion
\end{rem}

\begin{proof}[ Proof of Theorem \ref{the-Brownian-motion-result}]
Let $u : [0,T)\times \R \to \R$ be the solution of the  PDE associated to
\eqref{BSDE}. Since by Theorem \ref{thm1}
\equa
   Y_s = u(s, B_s), \quad  Z_s = u_x(s, B_s), \quad a.s.
\tion
we introduce 
\equa
F(s,x) := f(s, x, u(s, x), u_x(s, x)),
\tion
so that   $F(s,B_s)  =  f(s, B_s, Y_s, Z_s).$
We first give some properties satisfied by $F$.
   \begin{lemme}\label{F-properties} If  Assumption \ref{hypo3} holds then $F$ is a Lipschitz continuous  and
 polynomially bounded function in $x$ :
    \begin{align*}
     |F(t,x_1) -F(t,x_2)|& \le C( T, L_f, c^{2,3}_{\ref{thm1}}) (1+|x_1|^{p_0+1} +
    |x_2|^{p_0+1})\frac{|x_1 - x_2|}{(T-t)^{1-\frac{\alpha}{2}}},\\
      |F(t,x)|& \le  C( T, L_f,  c^{1,2}_{\ref{thm1}},K_f) \frac{\Psi(x)}{(T-t)^{ \frac{1-\alpha}{2}}},
    \end{align*}
    where $\Psi(x)$ is given in \eqref{Psi}.
\end{lemme}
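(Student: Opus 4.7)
Both inequalities follow directly from the Lipschitz hypothesis \eqref{Lipschitz} on $f$ combined with the pointwise bounds on $u$, $u_x$ and $u_{xx}$ supplied by Theorem \ref{thm1}; the content of the lemma is to record which of these bounds contributes which singular factor in $T-t$.

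For the Lipschitz-type estimate I would write
\[
F(t,x_1) - F(t,x_2) = f\bigl(t,x_1,u(t,x_1),u_x(t,x_1)\bigr) - f\bigl(t,x_2,u(t,x_2),u_x(t,x_2)\bigr)
\]
and apply \eqref{Lipschitz} to obtain
\[
|F(t,x_1)-F(t,x_2)| \le L_f\bigl(|x_1-x_2| + |u(t,x_1)-u(t,x_2)| + |u_x(t,x_1)-u_x(t,x_2)|\bigr).
\]
The two remaining differences I would control by the fundamental theorem of calculus in the spatial variable, integrating $u_x$ (respectively $u_{xx}$) along the segment joining $x_1$ and $x_2$. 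From Theorem \ref{thm1}, $u_x$ grows polynomially in $x$ with at worst a $(T-t)^{-(1-\alpha)/2}$ singularity, whereas $u_{xx}$ grows polynomially with the stronger singularity $(T-t)^{-(1-\alpha/2)}$. Factoring the polynomial parts into the envelope $1+|x_1|^{p_0+1}+|x_2|^{p_0+1}$ and absorbing the weaker singular and non-singular contributions into the dominant $(T-t)^{-(1-\alpha/2)}$ term yields the claimed inequality.

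For the pointwise bound I would use
\[
|F(t,x)| \le |f(t,0,0,0)| + L_f\bigl(|x| + |u(t,x)| + |u_x(t,x)|\bigr) \le K_f + L_f\bigl(|x| + |u(t,x)| + |u_x(t,x)|\bigr),
\]
and substitute the bounds $|u(t,x)| \le C\,\Psi(x)$ (uniform in $t$) and $|u_x(t,x)| \le C\,\Psi(x)/(T-t)^{(1-\alpha)/2}$ furnished by Theorem \ref{thm1}; collecting terms under the dominant singular factor gives the second estimate. No genuine obstacle arises: once Theorem \ref{thm1} is in hand the argument is essentially bookkeeping of constants. The only point that deserves care is to verify that the polynomial growth exponents produced by Theorem \ref{thm1} for $u$, $u_x$ and $u_{xx}$ fit inside the envelope $\Psi$ (i.e.\ are at most $p_0+1$) so that the prescribed polynomial factors really absorb all growth in $x$.
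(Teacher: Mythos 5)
Your proposal is correct and follows essentially the same route as the paper: decompose via the Lipschitz property of $f$, control $|u(t,x_1)-u(t,x_2)|$ and $|u_x(t,x_1)-u_x(t,x_2)|$ by the bounds on $u_x$ and $u_{xx}$ from Theorem \ref{thm1} evaluated on the segment between $x_1$ and $x_2$ (the paper uses the mean value theorem where you use the fundamental theorem of calculus, an immaterial difference), and absorb the weaker singularity into $(T-t)^{-(1-\alpha/2)}$ at the cost of a $T$-dependent constant. The bookkeeping point you flag is indeed harmless, since Theorem \ref{thm1} states all growth bounds directly in terms of $\Psi(x)=K(1+|x|^{p_0+1})$.
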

\begin{proof}[Proof of Lemma \ref{F-properties}]
  Thanks to the mean value theorem and Theorem \ref{thm1}-(ii-c) and (iii-b) we have  for $x_1, x_2 \in \R$  that there exist 
$\xi_1, \xi_2 \in [\min\{x_1,x_2\},\max\{x_1,x_2\}] $ such that
\equa 
|F(t,x_1) -F(t,x_2)| &=& |f(t, x_1, u(t, x_1), u_x(t, x_1)) - f(t, x_2, u(t, x_2), u_x(t, x_2))| \notag \\
& \le & L_f (  |x_1 - x_2|  + |u(t, x_1)-u(t, x_2)| + |u_x(t, x_1)- u_x(t, x_2)|) \notag\\
& \le & L_f  \left  ( 1 +  \frac{c^2_{\ref{thm1}} \Psi(\xi_1)}{(T-t)^{\frac{1-\alpha}{2}}}  +    \frac{ c^3_{\ref{thm1}} \Psi(\xi_2)}{(T-t)^{1-\frac{\alpha}{2}}} \right )|x_1-x_2|  \notag\\
 & \le & C(T,L_f, c^{2,3}_{\ref{thm1}}) (1+|x_1|^{p_0+1} + |x_2|^{p_0+1})\frac{|x_1 - x_2|}{(T-t)^{1-\frac{\alpha}{2}}}.
 \tion
 The second inequality can be shown similarly.
\end{proof}
For  the estimate of $\e |Y_{t_k}-  Y^n_{t_k} |^2$ we will use \eqref{BSDE} and \eqref{discrete-BSDE-sums}: Since  $Y^n_{t_k}$ is $\cf_{\tau_k}$-measurable we have 
\equal \label{y-difference} 
 \|Y_{t_k}-  Y^n_{t_k} \| &\le  &  \| \e_{t_k} g(B_T)  -  \e_{\tau_k} g(B^n_T) \|   \notag \\
 &&+
 \left \| \e_{t_k}\int_{t_k}^T f(s,B_s,Y_s,Z_s)ds  - h \e_{\tau_k}\sum_{m=k}^{n-1} f(t_{m+1}, B^n_{t_m},Y^n_{t_m},Z^n_{t_m})  \right \|.
\tionl
We  frequently express conditional expectations with the help of an independent copy of $B$ denoted by  $\ti B,$ for example  
$\e_{t} g(B_T) = \ti \e g(B_t + \ti B_{T -t}).$ 

By \eqref{eq5} and Lemma \ref{B-difference},
\equal \label{terminal}
\| \e_{t_k} g(B_T)  -  \e_{\tau_k} g(B^n_T) \|^2 &=& \e|\ti \e g(B_{t_k} + \ti B_{T - t_k}) - \ti \e g(B_{\tau_k} + \ti B_{\ti \tau_{n-k}})  |^2  \notag \\
&\le&  ( \E \ti  \e  (\Psi_1)^4 )^\half  ( \E \ti  \e   | B_{t_k} -   B_{\tau_k}    +  \ti B_{T - t_k} -  \ti B_{\ti \tau_{n-k}} |^{4 \alpha})^\half   \notag \\
&\le& C(C_g,T,p_0)(  (\E  | B_{t_k} -   B_{\tau_k}  |^{4 \alpha} )^\half  +  (\e|   B_{T- {t_k}} - B_{\tau_{n-k}}  |^{4 \alpha})^\half )   \notag\\
&\le&   C(C_g,T,p_0) h^\frac{\alpha}{2},
\tionl
where $\Psi_1:=C_g(1+|B_{t_k} + \ti B_{T - t_k}|^{p_0}+ |B_{\tau_k} + \ti B_{\ti \tau_{n-k}} |^{p_0}).$ 
To estimate the other term in \eqref{y-difference}  we  consider the decomposition
\equa
&& \hspace{-2em} \e_{t_k} f(s,B_s,Y_s,Z_s)-\e_{\tau_k}f(t_{m+1}, B^n_{t_m},Y^n_{t_m},Z^n_{t_m}) \\
&=&\!\!\! (\e_{t_k} f(s, B_s,Y_s,Z_s)- \e_{t_k}f(t_m, B_{t_m},Y_{t_m},Z_{t_m}) ) +( \e_{t_k}F(t_m,B_{t_m}) -\e_{\tau_k}F(t_m,B_{\tau_m})) \\
&&\!\!\! + (\e_{\tau_k}F(t_m,B_{\tau_m}) -\e_{\tau_k} F(t_m,B_{t_m})) 
 +(\e_{\tau_k} f(t_m, B_{t_m}, Y_{t_m},Z_{t_m}) - \e_{\tau_k}f(t_{m+1}, B^n_{t_m},Y^n_{t_m},Z^n_{t_m})) \\
&=:& D_1(s,m) + D_2(m)+...+ D_4(m)
\tion
so that 
\equa
&& \pull \left \|    \e_{t_k}\int_{t_k}^T f(s,B_s,Y_s,Z_s)ds -   h \e_{\tau_k}\sum_{m=k}^{n-1} f(t_{m+1}, B^n_{t_m},Y^n_{t_m},Z^n_{t_m}) \right \| 
 \notag\\&& 
 \le \sum_{m=k}^{n-1} \left (  \left \|  \int_{t_m}^{t_{m+1}}  D_1(s,m)  ds\right \|  +  h \sum_{i=2}^4  \|D_i(m) \| \right).
\tion
For  $D_1$ we have by Theorem \ref{difference-estimates for Y and Z} that
\equal \label{D1}
   \|    D_1(s,m)  \| 
&\le& L_f ( \sqrt{s-t_m} + \|B_s-B_{t_m}\|+   \|Y_s -Y_{t_m}\|+  \|Z_s -Z_{t_m}\|  ) \notag \\
&\le& C(T, L_f,C^y_{\ref{difference-estimates for Y and Z}},C^z_{\ref{difference-estimates for Y and Z}}, p_0) \,  (T-s)^{\frac{\alpha-2}{2}} \,h^\half ,
\tionl where the last inequality follows from $\|B_s-B_{t_m}\|= \sqrt{s-t_m} \le h^\half$  for $s \in [t_m, t_{m+1}]$  and 
\equa
 \|Y_s -Y_{t_m}\|+  \|Z_s -Z_{t_m}\| &\le&  ( \e\Psi(B_{t_m})^2)^\half  \left (  C^y_{\ref{difference-estimates for Y and Z}} \left (  \int_{t_m}^s (T-r)^{\alpha-1}  dr \right )^\half 
  +  C^z_{\ref{difference-estimates for Y and Z}} \left (   \int_{t_m}^s  (T-r)^{\alpha-2}  dr \right )^\half  \right )\\
 &\le&  C( T, C^y_{\ref{difference-estimates for Y and Z}}, C^z_{\ref{difference-estimates for Y and Z}}, p_0) \sqrt{s-t_m} ((T-s)^{\frac{\alpha-1}{2}}+ (T-s)^{\frac{\alpha-2}{2}}). 
 \tion
We bound $D_2$  using Lemma \ref{F-properties} and Lemma
\ref{B-difference}. Similar to \eqref{terminal} we conclude  (setting $\Psi_2:= 1+|B_{t_k}+  \ti B_{t_{m-k}}|^{p_0+1}+|B_{\tau_k} + \ti B_{\ti \tau_{m-k}}|^{p_0+1}$)
that
\equa
 \|D_2(m) \| &=& \left ( \e \left |\e_{t_k}F(t_m,B_{t_m}) -\e_{\tau_k}F(t_m,B_{\tau_m})\right|^2  \right )^\half \\
 &\le&   C( T, L_f, c^{2,3}_{\ref{thm1}})   (\e \ti \e \Psi_2^4 )^\frac{1}{4}\frac{1}{(T-t_m)^{1-\frac{\alpha}{2}}} \left (t_k  h +  t_{m-k} h \right)^\frac{1}{4}\\
&\le& C(T,p_0,L_f, c^{2,3}_{\ref{thm1}}) \frac{1}{(T-t_m)^{1-\frac{\alpha}{2}} } h^{\frac{1}{4}}.
\tion
For $D_3$ we apply again Lemma \ref{F-properties} and Lemma \ref{B-difference},
\equa
 \|D_3(m) \| \le  \|F(t_m,B_{t_m}) -F(t_m,B_{\tau_m})\|
&\le&  C(T, L_f,c^{2,3}_{\ref{thm1}})  \frac{1}{(T-t_m)^{1-\frac{\alpha}{2}}} \| \Psi_3\, |B_{t_m} -B_{\tau_m}|\| \\
&\le& C(T, p_0,L_f,c^{2,3}_{\ref{thm1}})  \frac{1}{(T-t_m)^{1-\frac{\alpha}{2}}}  h^\frac{1}{4},
\tion
where  $\Psi_3 := 1 + |B_{t_m}|^{p_0+1} + |B_{\tau_{m}}|^{p_0+1}$. 
For the last term $D_4$ we get
\equa
 \|D_4(m) \| &\le & 
 L_f (  h^\half + \| B_{t_m} - B^n_{t_m}\|    +  \| Y_{t_m} - Y^n_{t_m}\|+  \| Z_{t_m} -Z^n_{t_m}\| ).
\tion
Finally, using  the  estimates for the terms  $D_1(s,m), D_2(m),...,D_4(m)$ we arrive at
 \equal \label{Y-difference-estimate}
  \| Y_{t_k} - Y^n_{t_k}\|  
    &\le &    C(C_g,T,p_0){h^\frac{\alpha}{4} } + C( T, L_f,C^y_{\ref{difference-estimates for Y and Z}},C^z_{\ref{difference-estimates for Y and Z}}, p_0) \,
       h^\half \int_{t_k}^T  (T-s)^{\frac{\alpha-2}{2}}  ds   \notag \\
&& +  C(T,p_0, L_f, c^{2,3}_{\ref{thm1}})h^{\frac{1}{4}} \sum_{m=k}^{n-1} \frac{h}{(T-t_m)^{1-\frac{\alpha}{2}}}  +  h L_f \sum_{m=k}^{n-1}(  \| Y_{t_m} - Y^n_{t_m}\| +  \| Z_{t_m} -Z^n_{t_m}\| ) \notag\\
&\le &  C(C_g,T,p_0,L_f, c^{2,3}_{\ref{thm1}},C^y_{\ref{difference-estimates for Y and Z}},C^z_{\ref{difference-estimates for Y and Z}})  h^{\frac{\alpha}{4}}
+  h L_f \sum_{m=k}^{n-1}(  \| Y_{t_m} - Y^n_{t_m}\| +  \| Z_{t_m} -Z^n_{t_m}\|).
\tionl

For $\| Z_{t_k} -Z^n_{t_k}\|$ we exploit the representations \eqref{Z-representation} and \eqref{Z-n-representation} and estimate
\equa
  \| Z_{t_k} -Z^n_{t_k}\|   &\le & \frac{1}{T-t_k} \| \e_{t_k}  g(B_T)(B_T-B_{t_k}) -\e_{\tau_k} g(B_{\tau_n}) (B_{\tau_n}-B_{\tau_k}  )  \| \\
&& +  \Big  \|\e_{t_k}  \left ( \int_{t_{k+1}}^T f(s, B_s, Y_s,Z_s) \frac{B_s- B_{t_k} }{s-t_k} ds \right) \\
&&  \quad \quad -\e_{\tau_k}  \left ( h \sum_{m= k+1}^{n-1} f(t_{m+1}, B^n_{t_m},Y^n_{t_m},Z^n_{t_m})\frac{B^n_{t_m} - B^n_{t_k}}{t_m - t_k} \right )  \Big \| \\
&&  +  \Big  \|\e_{t_k}\int_{t_k}^{t_{k+1}} f(s, B_s, Y_s,Z_s) \frac{B_s- B_{t_k} }{s-t_k} ds \Big  \|.
\tion
Then, similar to  \eqref{terminal}, we have for the terminal condition  by  Lemma \ref{B-difference} that 
\equa
 && \les  \| \e_{t_k}  [g(B_T)(B_T-B_{t_k})] -\e_{\tau_k} [g(B_{\tau_n}) (B_{\tau_n}-B_{\tau_k}  )]  \| \\ 
  &= &  \|  \ti \e [g(B_{t_k} + \ti B_{T-t_k} ) -  g(B_{t_k})] (\ti B_{T-t_k} -\ti B_{\ti \tau_{n-k} })  +   \ti \e  [ g(B_{t_k} + \ti B_{T-t_k} ) -  g(B_{\tau_k}    +\ti B_{\ti \tau_{n-k} } )] \ti B_{\ti \tau_{n-k} }\|\\
  &\le &   C(C_g,T,p_0) h^\frac{1 }{4}  (T-t_k)^{\frac{\alpha }{2} +\frac{1 }{4}} 
  +  C(C_g,T,p_0) h^\frac{\alpha}{4} (T-t_k)^\half \le  C(C_g,T,p_0) h^\frac{\alpha }{4}  (T-t_k)^\frac{1}{2}.
\tion 
Here we have used that $\ti \e [g(B_{t_k}) (\ti B_{T-t_k} -\ti B_{\ti \tau_{n-k} }) ]=0$. The term $\ti \e [g(B_{t_k} + \ti B_{T-t_k} ) -  g(B_{t_k})] (\ti B_{T-t_k} -\ti B_{\ti \tau_{n-k}} )$ provides us with the factor $ (T-t_k)^{\frac{\alpha }{2}}((T-t_k)h)^{\frac{1}{4}}.$ 
For the next term of the estimate of $\| Z_{t_k} -Z^n_{t_k}\| $ we use for $s \in [t_m, t_{m+1})$,  where $m \geq k+1$,  the decomposition 
\equa
&& \pull \pull \frac{\e_{t_k}f(s, B_s ,Y_s,Z_s) (B_s- B_{t_k}) }{s-t_k} -\frac{\e_{\tau_k}f(t_{m+1}, B^n_{t_m}, Y^n_{t_m},Z^n_{t_m})(B^n_{t_m}-B^n_{t_k})}{t_m-t_k} \\
& = &\frac{\e_{t_k}f(s,B_s,Y_s,Z_s) (B_s- B_{t_k}) }{s-t_k} -\frac{\e_{t_k}f(t_m, B_{t_m},  Y_{t_m},Z_{t_m})(B_{t_m}-B_{t_k})}{t_m-t_k} \\
&& +\frac{\e_{t_k}F(t_m,B_{t_m})(B_{t_m}-B_{t_k})}{t_m-t_k} - \frac{\e_{\tau_k}F(t_m,B_{\tau_m})(B_{\tau_m}-B_{\tau_k})}{t_m-t_k} \\
&& + \e_{\tau_k}\left [ [F(t_m,B_{\tau_m}) - F(t_m,B_{t_m})] \frac{B_{\tau_m}-B_{\tau_k}}{t_m-t_k} \right ] \\
&& + \e_{\tau_k} \left [ [f(t_m, B_{t_m}, Y_{t_m},Z_{t_m})- f(t_{m+1}, B^n_{t_m}, Y^n_{t_m},Z^n_{t_m})]\frac{B^n_{t_m}-B^n_{t_k}}{t_m-t_k} \right ]  \\
&=:& T_1(s,m)+T_2(m) +...+ T_4(m).
\tion
 Then by the conditional H\"older inequality and by \eqref{D1} as well as by  Lemma \ref{F-properties}  we have
\equa
 \|T_1(s,m)\| &\le &\| D_1(s,m)\|\frac{ \|B_s- B_{t_k}\|}{s-t_k}  +\| f(t_m, B_{t_m}, Y_{t_m},Z_{t_m})\| \left \| \frac{ B_s- B_{t_k}}{s-t_k} - \frac{B_{t_m}-B_{t_k}}{t_m-t_k} \right \| \\
& \le & 
C( T, L_f,C^y_{\ref{difference-estimates for Y and Z}},C^z_{\ref{difference-estimates for Y and Z}},  p_0) \,   (T-s)^{\frac{\alpha-2}{2}}  \, \frac{h^\half}{ \sqrt{s-t_k}}\ \\
&& +  C( T, L_f,  c^{1,2}_{\ref{thm1}},K_f) \frac{(
    \e\Psi(B_{t_m})^2)^\half}{(T- t_m)^{\frac{1-\alpha}{2}}}  \\ && \times \left ( \frac{\|B_s- B_{t_m}\|}{s-t_k} + \|B_{t_m}-B_{t_k}\| \left |\frac{1}{s-t_k} - \frac{1}{t_m-t_k}  \right |\right ) \\
& \le &  
C(T, L_f, K_f, C^y_{\ref{difference-estimates for Y and Z}},C^z_{\ref{difference-estimates for Y and Z}},  c^{1,2}_{\ref{thm1}}, p_0)  (T-s)^{\frac{\alpha-2}{2}} \frac{h^\frac{1}{4} }{(s-t_k)^\frac{3}{4}}.
\tion
Indeed, 
\equa
\frac{\|B_s- B_{t_m}\|}{s-t_k} + \|B_{t_m}-B_{t_k}\| \left |\frac{1}{s-t_k} - \frac{1}{t_m-t_k} \right | & \le& \frac{\sqrt{s-t_m}}{s-t_k} + 
\frac{\sqrt{t_m-t_k} (s-t_m)}{(s-t_k)(t_m-t_k)}  
\le C  \frac{ h^\frac{1}{4} }{(s-t_k)^\frac{3}{4} },
\tion
where the last inequality follows from $s-t_m \le t_{m+1} -t_m = h$ and $ h \le t_m -t_k \le  s-t_k.$ We estimate $T_2$ 
with the help of Lemma \ref{F-properties} and  Lemma \ref{B-difference}  as follows :   
\equa
\|T_2(m)\|& \le &  \| \widehat D_2(m)\| \frac{ \|B_{t_m}-B_{t_k}\| }{t_m -t_k}  
 +    \|F(t_m,B_{\tau_m}) \|\frac{  \|B_{t_{m-k}} -B_{\tau_{m-k}} \|}{t_m -t_k}\\
&\le&C(T, p_0,L_f, K_f, c_{\ref{thm1}}) \frac{1}{(T-t_m)^{1-\frac{\alpha}{2}} }  \frac{h^\frac{1}{4}}{ (t_m -t_k)^\frac{3}{4}}.
\tion
  Here $\widehat D_2(m) := ( \ti \E |F(t_m, B_{t_k}+  \ti B_{t_{m-k}}) - F(t_m, B_{\tau_k}+ \ti B_{\ti \tau_{m-k}})|^2 )^{\frac{1}{2}}$ which can be 
estimated as $D_2(m).$
For $T_3$ the conditional H\"older inequality and Lemma \ref{B-difference} yield \equa
 \|T_3(m)\|&\le &   \|\widehat D_3(m)\| \left\|  \frac{B_{\tau_m}-B_{\tau_k}}{t_m-t_k}  \right\|  \le  C(T, p_0,L_f,c^{2,3}_{\ref{thm1}})  \frac{1}{(T-t_m)^{1-\frac{\alpha}{2}}}
 \frac{h^\frac{1}{4}}{ (t_m-t_k)^\frac{1}{2}},
\tion
where $\widehat D_3(m):=F(t_m, B_{\tau_m}) - F(t_m, B_{t_m})$ is estimated as $D_3(m).$
Finally,  \equa
 \|T_4(m)\|
 &\le &  L_f  ( h^\half + \|  B_{t_m} -B^n_{t_m} \|+ \|  Y_{t_m} -Y^n_{t_m} \| +  \| Z_{t_m} - Z^n_{t_m} \|)\frac{1}{ \sqrt{t_m-t_k}} .
 \tion
 For the estimate of   $ \Big  \| \e_{t_k}\int_{t_k}^{t_{k+1}} f(s, B_s, Y_s,Z_s) \frac{B_s- B_{t_k} }{s-t_k} ds  \Big \|$   one notices that by  the conditional H\"older inequality,
 \equa 
 \| \e_{t_k} f(s, B_s, Y_s,Z_s)  \tfrac{B_s- B_{t_k} }{s-t_k} \|&=& \| \e_{t_k} [(f(s, B_s, Y_s,Z_s) -  f(s, B_{t_k}, Y_{t_k},Z_{t_k})) \tfrac{B_s- B_{t_k} }{s-t_k}]\|\\
&\le &  \|f(s, B_s, Y_s,Z_s) -  f(s, B_{t_k}, Y_{t_k},Z_{t_k}) \|  \frac{1}{ \sqrt{s-t_k}} \\
&\le & C(T, L_f,C^y_{\ref{difference-estimates for Y and Z}},C^z_{\ref{difference-estimates for Y and Z}}, p_0) \, (T-s)^{\frac{\alpha-2}{2}} \, \frac{h^\half }{ \sqrt{s-t_k}}, 
\tion 
where  the last inequality follows in the same way  as in \eqref{D1}. Consequently,  we have
\equa
 \| Z_{t_k} -Z^n_{t_k}\| &\le &  \frac{ C(C_g,T,p_0)}{(T-t_k)^\frac{1}{2}}\, h^\frac{\alpha}{4}+  
 C(T, L_f, K_f, C^y_{\ref{difference-estimates for Y and Z}},C^z_{\ref{difference-estimates for Y and Z}},  c^{1,2}_{\ref{thm1}}, p_0) \int_{t_k}^T 
  \frac{ds  }{  (T-s)^{1-\frac{\alpha}{2}}    (s-t_k)^\frac{3}{4}} \, h^\frac{1}{4} \\
&&+   C(T,p_0,L_f, K_f,  c_{\ref{thm1}})  \, h \sum_{m=k+1}^{n-1}      \frac{1}{(T-t_m)^{1-\frac{\alpha}{2}} }  \frac{h^\frac{1}{4}}{ (t_m -t_k)^\frac{3}{4}}   \\\\
&&+   L_f h\sum_{m=k+1}^{n-1}  ( \|  B_{t_m} -B^n_{t_m} \|+ \|  Y_{t_m}
-Y^n_{t_m} \| +  \| Z_{t_m} - Z^n_{t_m} \|)\frac{1}{ \sqrt{t_{m-k}}}.
\tion
Lemma \ref{beta-function} enables  to bound the second and third term of the r.h.s. by $C
\frac{h^{\frac{1}{4}}}{(T-t_k)^{\frac{3}{4}-\frac{\alpha}{2}}}B(\frac{\alpha}{2},\frac{1}{4})$, which is
bounded by
$C\frac{h^{\frac{\alpha}{4}}}{(T-t_k)^{\half-\frac{\alpha}{4}}}$. Thus we get
\equa
\| Z_{t_k} -Z^n_{t_k}\| &\le &  \frac{ C_0 \, h^\frac{\alpha}{4} }{(T-t_k)^\frac{1}{2}}+   L_f h \sum_{m=k+1}^{n-1}  (\|  Y_{t_m} -Y^n_{t_m} \| +  \| Z_{t_m} - Z^n_{t_m} \|)\frac{1}{ \sqrt{t_{m-k}}}.
\tion
Then we use \eqref{Y-difference-estimate} and the above estimate to get
\equa
 \| Y_{t_k} - Y^n_{t_k}\| + \| Z_{t_k} -Z^n_{t_k}\| &\le &  \frac{ C_0 \, h^\frac{\alpha}{4} }{(T-t_k)^\frac{1}{2}} 
  +    C(L_f) \,h \sum_{m=k+1}^{n-1}  (\|  Y_{t_m} -Y^n_{t_m} \| +  \| Z_{t_m} - Z^n_{t_m} \|)\frac{1}{ \sqrt{t_{m-k}}}.
\tion
If  this inequality is iterated,  one  gets  a shape where the Gronwall lemma applies.  Indeed, setting
$a_m :=  (\|  Y_{t_m} -Y^n_{t_m} \| +  \| Z_{t_m} - Z^n_{t_m} \|)$  one has to consider the double sum
\equa
\sum_{m=k+1}^{n-1} \left (   \sum_{l=m+1}^{n-1}  a_l    \frac{h}{ \sqrt{t_{l-m}}} \right )   \frac{h}{ \sqrt{t_{m-k}}}  
&=& h \sum_{l=k+1}^{n-1} \left (   \sum_{m=k+1}^{l-1}   \frac{h}{ \sqrt{t_{m-k}} \sqrt{t_{l-m} }}    \right ) a_l   
\le  Ch \sum_{l=k+1}^{n-1}   a_l  .
\tion
Consequently,
\equa
 \| Y_{t_k} - Y^n_{t_k}\| + \| Z_{t_k} -Z^n_{t_k}\| &\le &  \frac{  C_0 \, h^\frac{\alpha}{4} }{(T-t_k)^\frac{1}{2}} 
 \tion
which gives the bound on the error on $Z$. Moreover, \eqref{Y-difference-estimate} yields
\equa
  \| Y_{t_k} - Y^n_{t_k}\|
    &\le &    C_0 \, h^{  \frac{\alpha}{4}} .
\tion

If $v \in [t_k,t_{k+1}),$ we have by Theorem  \ref{difference-estimates for Y and Z}  that
\begin{align*}
  & \|Y_v- Y^n_v \| \le   \|Y_v- Y_{t_k} \| +    \|Y_{t_k}- Y^n_{t_k} \|  \le
      C(C^y_{\ref{difference-estimates for Y and Z}},  T, p_0 ) \,
   \left ( \int_{t_k}^v (T-r)^{\alpha -1}dr \right )^\half   +     \|Y_{t_k}- Y^n_{t_k} \|,\\
 &\|Z_v- Z^n_v \| \le \|Z_v- Z_{t_k} \| +  \|Z_{t_k}- Z^n_{t_k} \|  \le 
   C(C^z_{\ref{difference-estimates for Y and Z}}, T, p_0)
  \left ( \int_{t_k}^v (T-r)^{\alpha -2}dr\right)^\half +  \|Z_{t_k}- Z^n_{t_k} \|,
\end{align*}
 where

$$  \int_{t_k}^v (T-r)^{\alpha -1}dr  \le  \frac{1}{\alpha}  (v-t_k)^\alpha   \le  \frac{1}{\alpha}  h^\alpha $$ 
 and  
 \begin{align*}
\int_{t_k}^v (T-r)^{\alpha - 2} dr & \leq \frac{1}{(T-v)^{1-\frac{\alpha}{2}}} \int_{t_k}^v (T-r)^{\frac{\alpha}{2} -1} dr\leq \frac{1}{(T-v)^{1-\frac{\alpha}{2}}} \frac{2}{\alpha} (v-t_k)^{\frac{\alpha}{2}} 
 \leq \frac{2}{\alpha}\frac{h^{\frac{\alpha}{2}}}{(T-v)^{1-\frac{\alpha}{2}}}.
\end{align*} 
\end{proof}

\section{Numerical simulations}\label{sect:numerics}
This section deals with the algorithm used to compute
$(Y^n_{t_k},Z^n_{t_k})_{k=0,\cdots,n}$ and numerical experiments 
for three different terminal conditions. In each case the exact solution is
available and we are able to compute the error $(Y^n-Y,Z^n-Z)$ in $L_2$-norm.
\subsection{\bf Simulation of \texorpdfstring{$(\tau_1,\cdots,\tau_n)$}{tau} and \texorpdfstring{$B^n$}{Bn}}

In order to simulate $(\tau_1,\cdots,\tau_n)$, we use the fact that
\begin{align*}
 \tau_0=0 \qquad \mbox{ and } \qquad \forall k \ge 1,\qquad \tau_k=\tau_{k-1}+\sigma_k,
\end{align*}
where $(\sigma_k)_{1\le n}$ is an i.i.d. sequence whose common
law $\sigma$ represents the first exit time of the Brownian motion $B$ of the interval $[-\sqrt{h},\sqrt{h}]$,
\begin{align*}
  \sigma:=\inf\{t> 0 : |B_t|=\sqrt{h}\}
\end{align*}
From
the book of Borodin and Salminen \cite{BS_15}, we have that the Laplace transform of $\sigma$ is
given by $\E(e^{-\lambda \sigma})=\frac{1}{\cosh(\sqrt{2 \lambda h})}$.

Let $F$ denote the cumulative distribution function of $\sigma$. It holds
$\E(e^{-\lambda \sigma})=\lambda \hat{F}(\lambda)$, where
$\hat{F}$ is the Laplace transform of $F$. Then, to obtain $F$, it remains to
inverse numerically its Laplace transform. Once we have $F$, we simulate the
sequence $(\sigma_k)_{1\le k\le n}$ by following the steps of Algorithm \ref{algo1}.

\begin{algorithm}[H]
\caption{Simulation of the sequence $(\tau_1,\cdots,\tau_n)$}\label{algo1}
\begin{algorithmic} 
  \State  Simulate one vector with uniform law $(U_1,\cdots,U_n)$\\
  \State $\tau_0=0$
  \For {$k=1 :n$}
  \State Compute $\sigma_k:=F^{-1}(U_k)$
  \State Define $\tau_k=\tau_{k-1}+\sigma_k$
  \EndFor
\end{algorithmic}
\end{algorithm}

\subsection{\bf Simulation of \texorpdfstring{$B^n$}{bn}}
In order to get the trajectory $B^n_{t_1},\cdots,B^n_{t_n}$ ($B^n_{t_0}=0$), we simulate
an i.i.d. Bernoulli sequence $(\xi_k)_{1 \le k \le n}$ i.e. $\p(\xi_k=\pm 1)=\half$. Then
\begin{align}\label{eq26}
 B^n_{t_{k+1}}=\left\{ \begin{array}{ll}
    B^n_{t_k}+ \sqrt{h} & \mbox{ if } \xi_k=1\\
                         B^n_{t_k}-\sqrt{h} & \mbox{ otherwise. }\\
                       \end{array}
                                              \right.
\end{align}

\subsection{Simulation of \texorpdfstring{$(Y^n,Z^n)$}{yz}}
Since $B^n$ is built using the random walk \eqref{eq26}, it can be represented
by a recombining binomial tree. Both $(Y^n_{t_k})_{0 \le k \le n}$ and
$(Z^n_{t_k})_{0 \le k \le n-1}$ can then also be represented as a recombining
binomial tree. Since $Y^n_{t_n}=g(B^n_{t_n})$, we solve backward in time the
BSDE by following these equalities, ensuing from \eqref{discrete-BSDE-sums}
($Y^n_{t_k}$ has been replaced by $Y^n_{t_{k+1}}$ in the generator term, but
the error induced by this modification is smaller than the ones we consider)

\begin{align*}
  &Z^n_{t_k}=\frac{1}{\sqrt{h}}\E_{\tau_k}(Y^n_{t_{k+1}}\eps_{k+1}),\\
  &Y^n_{t_k}=\E_{\tau_k}(Y^n_{t_{k+1}}+h f(t_{k+1},B^n_{t_k},Y^n_{t_{k+1}},Z^n_{t_k})).
\end{align*}

\subsection{Study of the error \texorpdfstring{$\E|Y^n_{t_k}-Y_{t_k}|^2$}{y-y} and
  \texorpdfstring{$\E|Z^n_{t_k}-Z_{t_k}|^2$}{z-z}}
In this subsection we assume that we are able to compute the exact solution $(Y,Z)$.
We want to study numerically the convergence in $n$ of
$\E|Y^n_{t_k}-Y_{t_k}|^2$ and $\E|Z^n_{t_k}-Z_{t_k}|^2$, where $(Y,Z)$ solves
\eqref{BSDE} and $(Y^n,Z^n)$ solves \eqref{discrete-BSDE}. To do so, we approximate
the error $\E|A^n_{t_k}-A_{t_k}|^2$ ($A=Y$ or $A=Z$) by Monte Carlo:
\begin{align}\label{MC-error}
\E|A^n_{t_k}-A_{t_k}|^2\sim \frac{1}{M}\sum_{m=1}^M
|A^{n,m}_{t_k}-A^m_{t_k}|^2:=E_A
\end{align}

\begin{enumerate}
\item For each Monte Carlo simulation, we pick at random one sequence
  $(\xi_1,\cdots,\xi_n)$ (which gives the value of
  $(B^n_{t_1},\cdots,B^n_{t_n})$) and one sequence $(\tau_1,\cdots,\tau_n).$
\item From the sequence $(\xi_1,\cdots,\xi_n)$ we get the trajectory of $Y^n$, including $Y^n_{t_k}$.
\item From the sequence $(B_{\tau_1},\cdots,B_{\tau_n})$ (which is equal to  $(B^n_{t_1},\cdots,B^n_{t_n})$), we compute
  $B_{t_k}$ by using the Brownian bridge method. We deduce
  $(Y_{t_k},Z_{t_k})$ as functions of $B_{t_k}$.
\end{enumerate}

In the following experiments, we plot the logarithm of the errors $E_Y$ and $E_Z$
(defined in \eqref{MC-error}) w.r.t. $\log(n)$. From Theorem
\ref{the-Brownian-motion-result}, we get that  $\log(E_Y)$ and $\log(E_Z)$ decrease as
$-\frac{\alpha}{2}\log(n)$. By using a linear regression, we compute the
slope of the line solving the least square problem and compare it to $-\frac{\alpha}{2}$.

\subsection{Numerical Experiment}
\subsubsection{Case \texorpdfstring{$g(x)=e^{T+x}$}{T+x} and \texorpdfstring{$f(y,z)=y+z$}{y+z}}
We consider the BSDE with terminal condition $g(x)=e^{T+x}$ and driver
$f(y,z)=y+z$. In this case, we know that $Y_t=e^{T+B_t+\frac{5}{2}(T-t)}$.
We run $M=20000$ Monte Carlo simulations.

\begin{figure}[H]
  \centering
  \includegraphics[width=10cm]{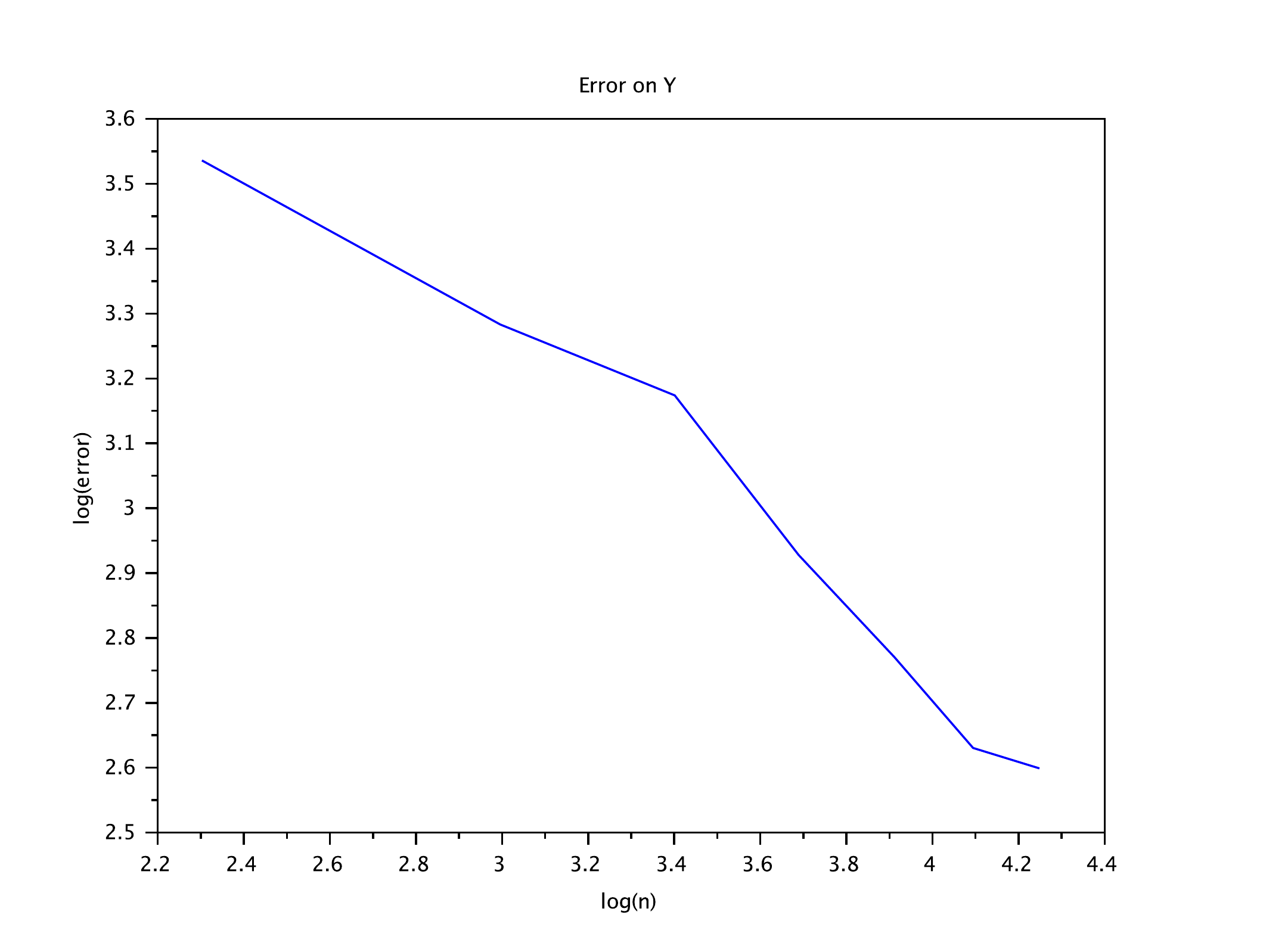} 
  \caption{$\log(\mbox{error on Y})$ w.r.t. $\log(n)$ - $f(y,z)=y+z$ -
    $g(x)=e^{T+x}$}
  \label{fig1}
\end{figure}

\begin{figure}[H]
  \centering
  \includegraphics[width=10cm]{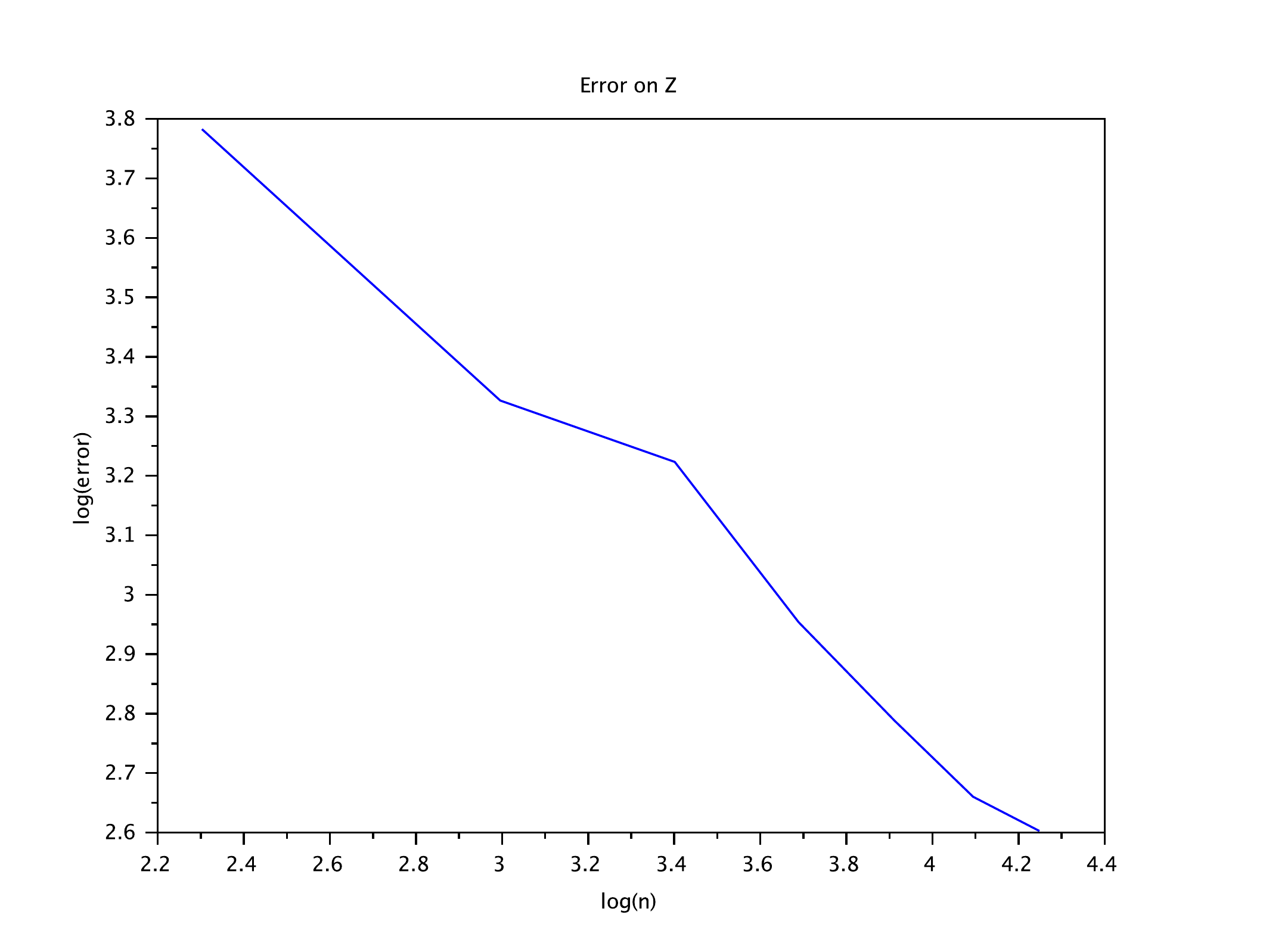}
  \caption{$\log(\mbox{error on Z})$ w.r.t.  $\log(n)$ - $f(y,z)=y+z$ -
    $g(x)=e^{T+x}$}
  \label{fig2}
\end{figure}

Figure \ref{fig1} (resp.~Figure \ref{fig2}) represents $\log(\mbox{error on Y})$ (the error is defined by
\eqref{MC-error}) (resp.~$\log$(error on Z)) with respect to
$\log(n)$. For the $Y$ case, the slope ensuing from the linear regression is $-0.53$. Even though  $g(x)=e^{T+x}$ does not satisfy \eqref{eq5}, 
$g$ is locally  Lipschitz continuous, and the outcome seems to be consistent
with Theorem \ref{the-Brownian-motion-result} for $\alpha=1$. For the $Z$ case, we get the slope $-0.61$.

\subsubsection{Case \texorpdfstring{$g(x)=x^2$}{x2} and \texorpdfstring{$f(y,z)=y+z$}{y+x}}
In that case, we know that $Y_t=e^{T-t}((B_t-(T-t))^2+ T-t)$ and
$Z_t=2e^{T-t}(B_t-(T-t))$.
We run $M=20000$ Monte Carlo simulations.

\begin{figure}[H]
  \centering
  \includegraphics[width=10cm]{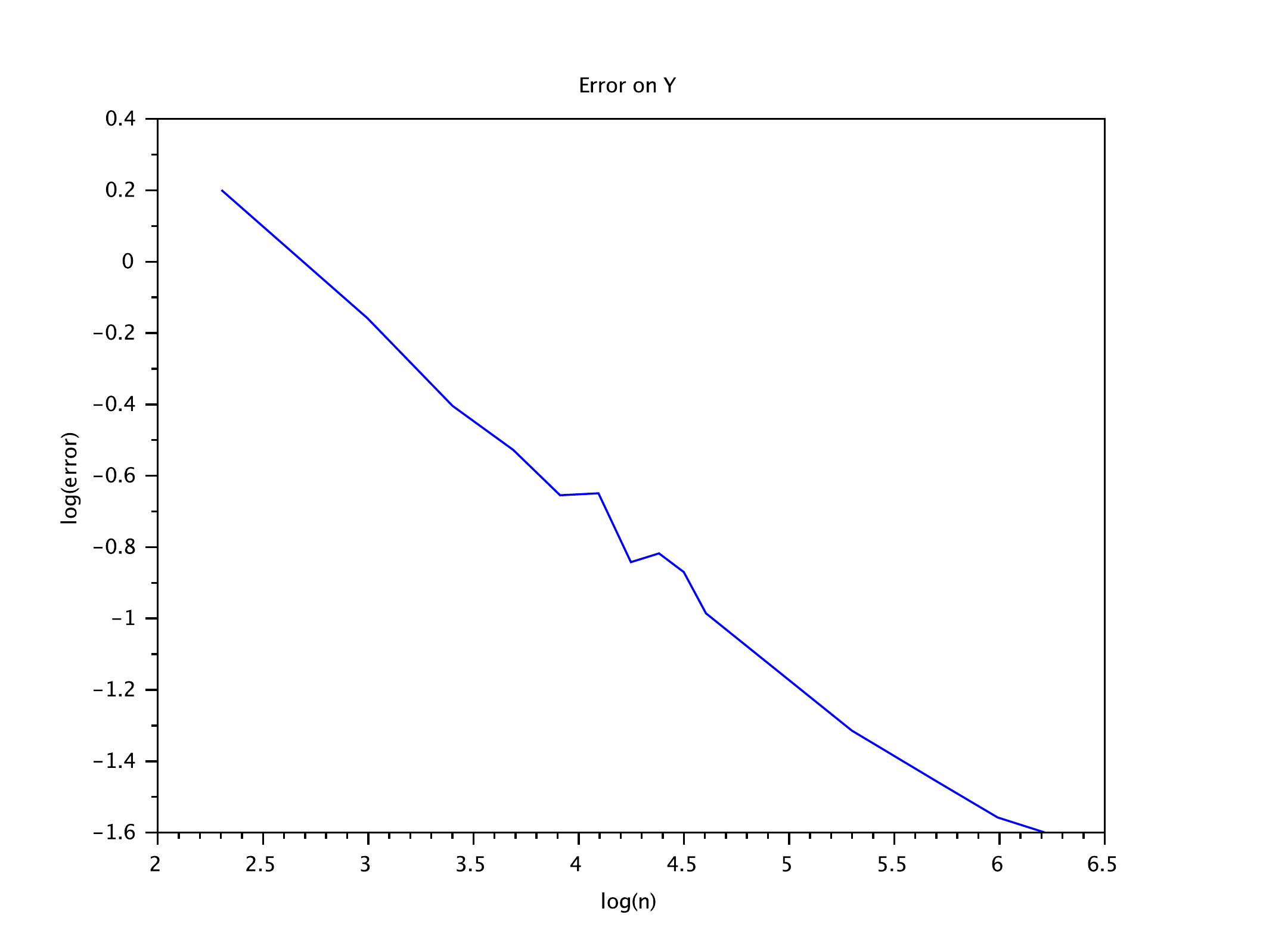}
  \caption{$\log(\mbox{error on $Y$})$ as a function of $\log(n)$ - $f(y,z)=y+z$ - $g(x)=x^2$}
  \label{fig3}
\end{figure}

\begin{figure}[H]
  \centering
  \includegraphics[width=10cm]{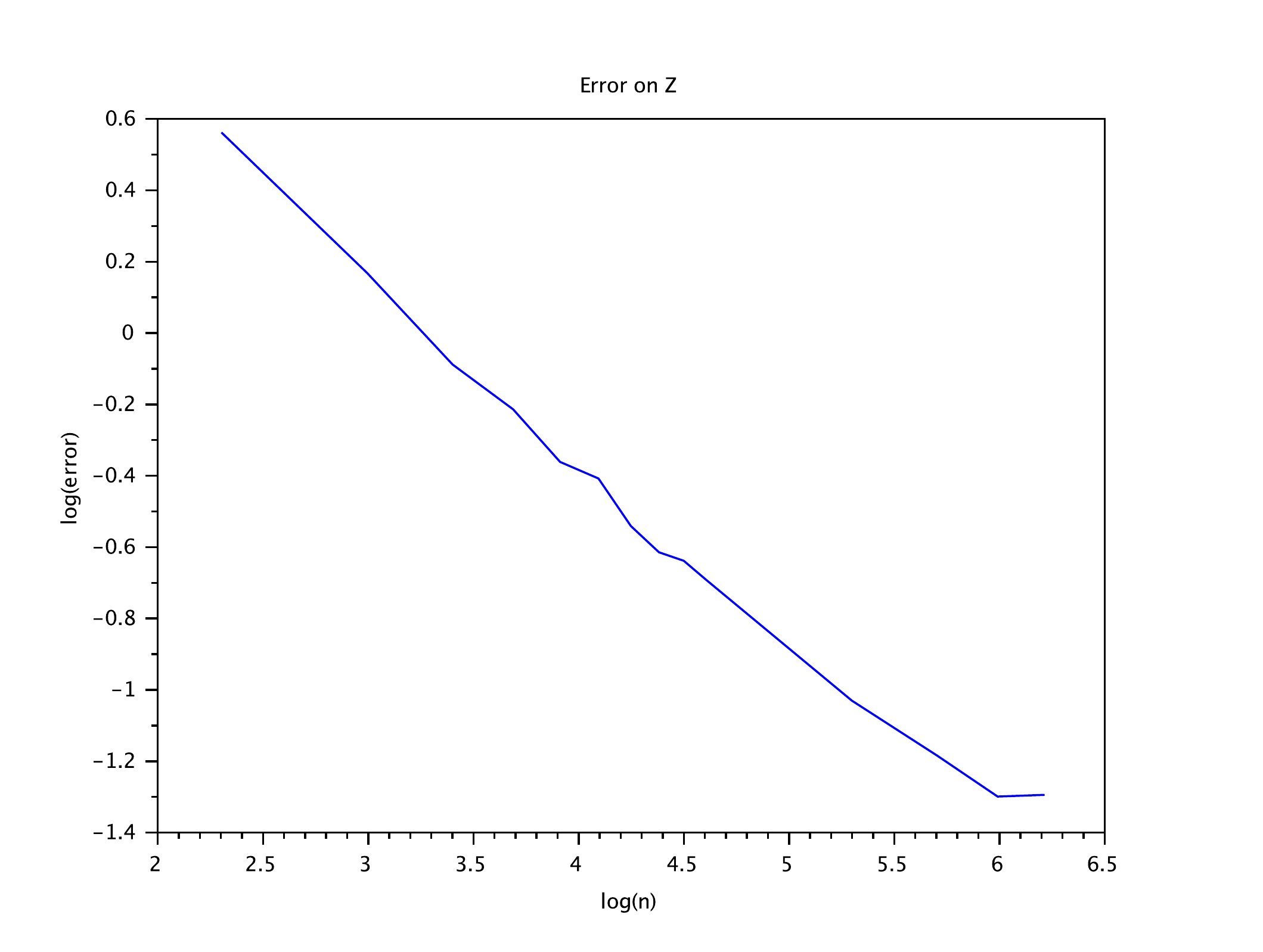}
  \caption{$\log(\mbox{error on $Z$})$ as a function of $\log(n)$ - $f(y,z)=y+z$ - $g(x)=x^2$}
  \label{fig4}
\end{figure}

Figure \ref{fig3} represents $\log(\mbox{error on $Y$})$ with respect to
$\log(n)$. The slope of the linear
regression is $-0.465$. Figure \ref{fig4} represents $\log(\mbox{error on $Z$})$ with respect to
$\log(n)$. The slope of the linear regression is $-0.48$. The results are then
consistent with Theorem \ref{the-Brownian-motion-result}.

\subsubsection{Case \texorpdfstring{$g(x)=\sqrt{|x|}$}{modulus} and \texorpdfstring{$f(y,z)=y+z$}{y+z}}
In that case, we know that $Y_t=e^{\frac{T-t}{2}}\ti \e(\sqrt{|\tilde{B}_{T-t}+B_t|}e^{\tilde{B}_{T-t}})$.
We run $M=20000$ Monte Carlo simulations.

\begin{figure}[H]
  \centering
  \includegraphics[width=10cm]{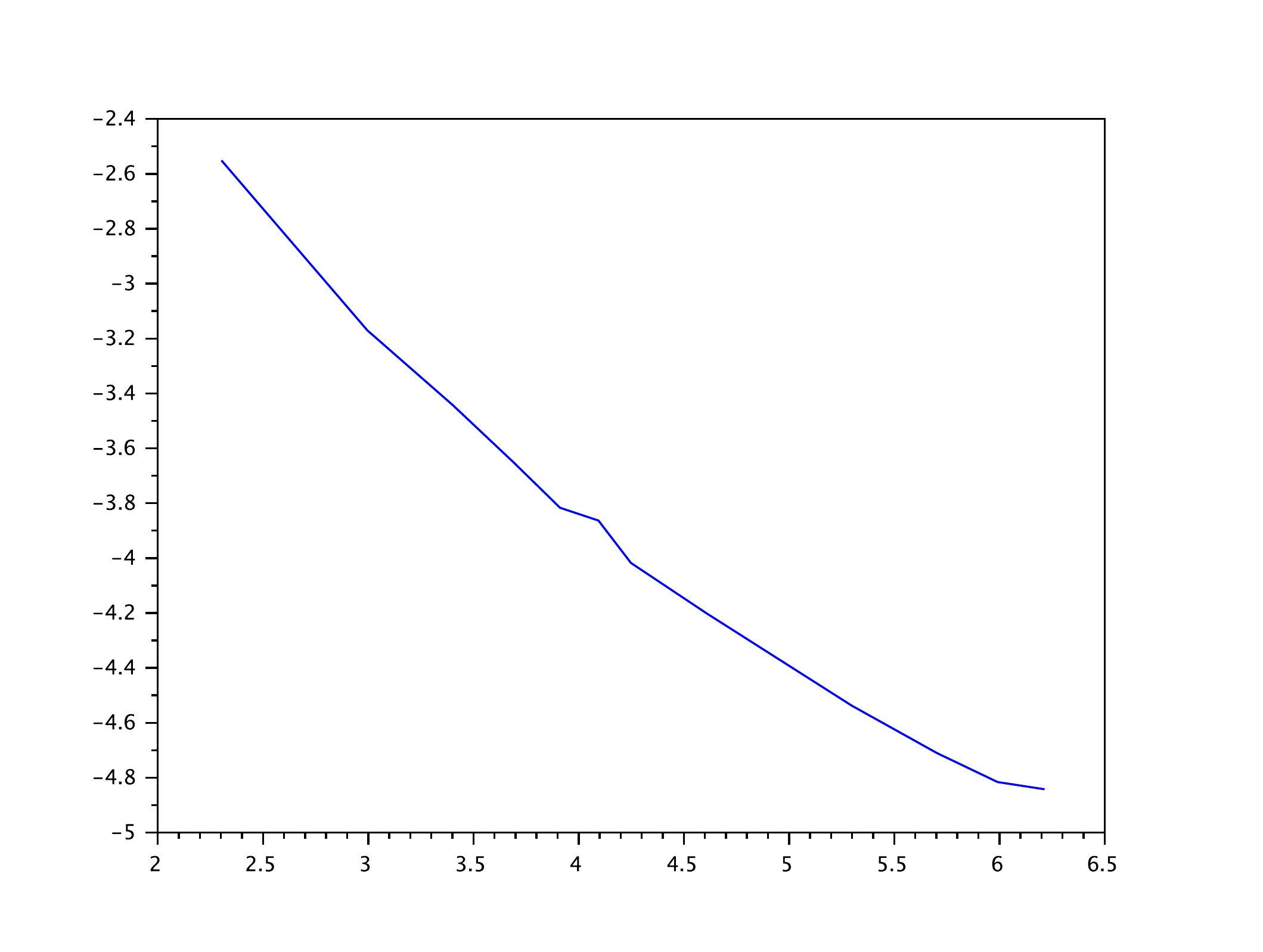}
  \caption{$\log(\mbox{error on $Y$})$ as a function of $\log(n)$ - $f(y,z)=y+z$ - $g(x)=\sqrt{|x|}$}
  \label{fig5}
\end{figure}

Figure \ref{fig5} represents $\log(\mbox{error on $Y$})$ with respect to
$\log(n)$. The slope of the linear
regression $-0.56$. Here we notice that the modulus of the slope we get is larger than
$\frac{1}{4}$, the upper bound obtained in that case in Theorem \ref{the-Brownian-motion-result}.

\section{Some properties of solutions to PDEs and BSDEs}\label{sect:PDE_FBSDE}
In the following we recall and prove results for FBSDEs with a general forward
process, even though we apply them in the present paper only for the case
where the forward process is just the Brownian motion. Restricting ourselves
to the case of Brownian motion would not shorten the proofs considerably.
Let us consider the following SDE started in $(t,x)$, 
\equal \label{process-X}
X^{t,x}_s = x +  \int_t^s b(r,X^{t,x}_r)dr + \int_t^s  \sigma(r, X^{t,x}_r)dB_r,   \quad 0\le t \le s \le T,
\tionl
where $b$ and $\sigma$ satisfy

\begin{hypo}\label{hypo2}\hfill
  \begin{enumerate}
\item   $b, \sigma  \in C_b^{0,2}([0,T]\times \R),$  in the sense that the  derivatives of order $k=0,1,2$  w.r.t.~the space variable  are continuous and bounded  on  $[0,T]\times \R,$
\item the  first and second derivatives of $b$  and  $\sigma$  w.r.t.~the space variable are assumed to be $\gamma$-H\"older continuous 
(for some $\gamma \in (0,1],$ w.r.t. the parabolic metric
$d((x,t),(x',t'))=(|x-x'|^2 +|t-t'|)^\half$ on all compact subsets of $[0, T ] \times \R$, 
\item   $b, \sigma$ are $\half$-H\"older continuous in time,
uniformly in space, 
\item $\sigma(t,x) \ge \delta >0$ for all $(t,x).$ 
\end{enumerate}
\end{hypo}

\subsection{Malliavin weights}

In this section we recall the Malliavin weights and their properties from  \cite[Subsection 1.1 and Remark 3]{GGG}. 
\begin{lemme}  \label{Malliavin-weights} Let  $H: \R \to \R$ be a polynomially bounded Borel function.
If  Assumption \ref{hypo2} holds and  $X^{t,x}$   is given by \eqref{process-X}, 
then setting  $$  G(t,x) := \E H(X_R^{t,x}) $$ 
implies that 
 $ G \in  C^{1,2}([0,R)\times \R ).$ 
Especially it holds for $0 \le t \le r <R \le T$  that 
$$ \partial_x G(r, X_r^{t,x}) =  \E [ H(X_R^{t,x}) N_R^{r,1,(t,x)} |\cf^t_r ], \quad \text{ and } \quad 
\partial^2_x G(r, X_r^{t,x}) = \E [ H(X_R^{t,x}) N_R^{r,2,(t,x)} |\cf^t_r ], $$
where $(\cf^t_r)_{r\in [t,T]}$  is the augmented natural filtration of  $(B^{t,0}_r)_{r \in [t,T]},$
$$N_R^{r,1,(t,x)}= \frac{1}{R-r} \int_r^R\frac{\nabla X^{t,x}_s }{\sigma(s,X_s^{t,x}) \nabla X^{t,x}_r  } dB_s \,\, \text{and} \,\,
N_R^{r,2,(t,x)}=\frac{ N^{\rho,1 ,(t,x)}_R \nabla X_R^{t,x}  N^{r,1 ,(t,x)}_\rho + \nabla N^{\rho,1 ,(t,x)}_R}{\nabla X^{t,x} _r},$$
with $\rho:=\frac{r+R}{2}$. Moreover,  for $ q \in (0, \infty)$  
 it holds a.s.
\equal \label{norm-weight}
 (\e[| N_R^{r,i,(t,x)}|^q |\cf^t_r ])^\frac{1}{q} \le \frac{\kappa_q}{(R-r)^\frac{i}{2}}, 
 \tionl
and  $\e[ N_R^{r,i,(t,x)} |\cf^t_r ]=0$  a.s. for $i=1,2.$ Finally, we have 
  $$\|   \partial_x G(r, X_r^{t,x}) \|_{L_p(\PP)} \le \kappa_{q} \frac{\|H(X_R^{t,x}) - \e[ H(X_R^{t,x})|\cf^t_r ]\|_{L_p(\PP)}  }{\sqrt{R-r}}   $$
  and
  $$\|   \partial^2_x G(r, X_r^{t,x}) \|_{L_p(\PP)} \le \kappa_{q} \frac{\|H(X_R^{t,x}) - \e[ H(X_R^{t,x})|\cf^t_r ]\|_{L_p(\PP)}  }{R-r}   $$
for $1<q,p < \infty$ with $\frac{1}{p} + \frac{1}{q} =1.$  
\end{lemme}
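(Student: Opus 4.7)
The plan is to recover, in our setting, the Bismut--Elworthy--Li representation for the first and second space derivatives of $G$, essentially mirroring the argument carried out in \cite{GGG}. Under Assumption~\ref{hypo2}, the flow $x \mapsto X^{t,x}_s$ is twice differentiable in $x$, with first variation $\nabla X^{t,x}$ solving the linear SDE obtained by formally differentiating \eqref{process-X} in $x$, and an analogous linear SDE for $\nabla^2 X^{t,x}$. Both $\nabla X^{t,x}$ and its reciprocal have moments of every order (the latter because $\partial_x \sigma$ is bounded), and the uniform ellipticity $\sigma \ge \delta$ guarantees that the stochastic integral defining $N^{r,1,(t,x)}_R$ is well defined with finite conditional $L_q$-moments for every $q \in (1,\infty)$.

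For the representation of $\partial_x G$, I would first reduce to smooth $H$ by a standard approximation argument that uses the polynomial growth of $H$ together with the uniform polynomial $L_p$-bounds on $\nabla X^{t,x}$ to justify passing to the limit. Applying the Malliavin duality formula on the interval $[r, R]$ then transfers the spatial derivative from $H$ onto a stochastic integral whose integrand is $\nabla X^{t,x}_s / (\sigma(s, X^{t,x}_s)\nabla X^{t,x}_r)$, and conditioning on $\cf^t_r$ yields exactly $\partial_x G(r, X^{t,x}_r) = \E[H(X^{t,x}_R) N^{r,1,(t,x)}_R \mid \cf^t_r]$; the factor $(R-r)^{-1}$ in the definition of $N^{r,1,(t,x)}_R$ is the normalization produced by the integration by parts.

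For the second derivative the natural idea is to localize in time at $\rho := (r+R)/2$: one writes $G(r,x) = \E\, G(\rho, X^{r,x}_\rho)$, applies the first-derivative representation to the inner $G(\rho,\cdot)$ via the weight $N^{\rho,1,(t,x)}_R$ on $[\rho, R]$, and then differentiates once more in $x$ using the flow. The chain rule produces the two pieces visible in the formula for $N^{r,2,(t,x)}_R$: the term $N^{\rho,1,(t,x)}_R \nabla X^{t,x}_R N^{r,1,(t,x)}_\rho / \nabla X^{t,x}_r$ comes from differentiating through the outer expectation (and an integration by parts on $[r,\rho]$), while the term $\nabla N^{\rho,1,(t,x)}_R / \nabla X^{t,x}_r$ comes from differentiating the inner weight through the flow. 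Splitting at the midpoint $\rho$ is what makes the two factors of order $(R-r)^{-1/2}$ combine to the desired $(R-r)^{-1}$.

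The $L_q$-bounds on the weights then follow from Burkholder--Davis--Gundy applied to the Itô integrals defining them, together with $\sigma \ge \delta$ and the moment estimates for $\nabla X^{t,x}$, $\nabla^2 X^{t,x}$ and $1/\nabla X^{t,x}$; the zero conditional mean is immediate since each weight is an Itô integral starting at time $r$. The improved $L_p$-bounds on $\partial^i_x G$ follow by conditional Hölder, using $\E[N^{r,i,(t,x)}_R \mid \cf^t_r]=0$ to replace $H(X^{t,x}_R)$ inside the representation by its centred version $H(X^{t,x}_R) - \E[H(X^{t,x}_R)\mid \cf^t_r]$. The main obstacle I foresee is the bookkeeping around the approximation by smooth $H$ and around the second-derivative formula, where one must justify exchanging differentiation with the nested conditional expectations; both become routine once the moment bounds on $\nabla X^{t,x}$, $\nabla^2 X^{t,x}$ and $1/\nabla X^{t,x}$ are established.
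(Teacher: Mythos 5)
The paper itself offers no proof of this lemma: it is recalled verbatim from \cite[Subsection 1.1 and Remark 3]{GGG}, so there is no in-text argument to compare yours against line by line. Your sketch follows the same route as that reference: Bismut--Elworthy--Li integration by parts on $[r,R]$ for $\partial_x G$, the midpoint splitting at $\rho=(r+R)/2$ which composes two first-order weights and adds the spatial derivative of the inner weight for $\partial_x^2 G$, conditional Burkholder--Davis--Gundy together with $\sigma\ge\delta$ and the moment bounds on $\nabla X^{t,x}$ and $1/\nabla X^{t,x}$ for \eqref{norm-weight}, and centering plus conditional H\"older for the final $L_p$ estimates. At the level of strategy this is the right proof, and the explanation of why the midpoint split turns two factors of order $(R-r)^{-1/2}$ into the required $(R-r)^{-1}$ is exactly the point of the construction.

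Two places need more care. First, the lemma asserts $G\in C^{1,2}([0,R)\times\R)$, and your argument only produces the two spatial derivatives; the existence and continuity of $\partial_t G$ (and the joint continuity of $\partial^2_x G$) is precisely where the H\"older conditions on the derivatives of $b$ and $\sigma$ and the $\tfrac12$-H\"older continuity in time from Assumption \ref{hypo2} enter, via the Kolmogorov backward equation and interior Schauder estimates; this part is not addressed at all in your plan. Second, your claim that $\e[N_R^{r,i,(t,x)}\,|\,\cf^t_r]=0$ is ``immediate since each weight is an It\^o integral starting at time $r$'' is only correct for $i=1$: the second-order weight is a product of two stochastic integrals plus the spatial derivative of one, and the summand $N_R^{\rho,1,(t,x)}\nabla X_R^{t,x}N_\rho^{r,1,(t,x)}/\nabla X_r^{t,x}$ does not have vanishing conditional mean on its own. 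The clean argument is to apply the representation you have just established to $H\equiv 1$, for which $G\equiv 1$ and hence $\e[N_R^{r,i,(t,x)}\,|\,\cf^t_r]=\partial_x^i\,1=0$. Both issues are repairable without changing the overall plan.
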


\subsection{Regularity of solutions to BSDEs}

Let us now consider the FBSDE
\begin{align}\label{BSDE2}
Y^{t,x}_s&= g( X^{t,x}_T) + \int_s^T f(r,X^{t,x}_r,Y^{t,x}_r,Z^{t,x}_r)dr - \int_s^T Z^{t,x}_r dB_r,  \quad \quad 0 \le t\leq s  \leq T, 
\end{align}
where  $X^{t,x}$ is the process satisfying \eqref{process-X}.
The following result is taken from \cite[Theorem 1]{GGG}.
We reformulate it here for the simple situation where we need it. On the other hand,
we will use $\PP_{t,x}$ and are interested in an estimate for all $(t,x) \in  [0,T) \times \R.$

\begin{thm} \label{difference-estimates for Y and Z}
Let Assumption \ref{hypo3} and  \ref{hypo2} hold.
Then for any $p\in [2,\infty)$ the following assertions are true.
\begin{enumerate}[(i)]
\item There exists a constant $C^y_{\ref{difference-estimates for Y and Z}} >0$ such that for $0\le t < s<T$ and $x\in \R,$
\equa
 \| Y_s - Y_t\|_{L_p(\PP_{t,x})} \le C^y_{\ref{difference-estimates for Y and Z}}  \Psi(x) \left ( \int_t^s (T-r)^{\alpha -1}dr \right )^\half,
\tion

\item There exists a constant $C^z_{\ref{difference-estimates for Y and Z}} >0$ such that for $0\le t <  s<T$ and $x\in \R,$
\equa
 \| Z_s - Z_t\|_{L_p(\PP_{t,x})} \le C^z_{\ref{difference-estimates for Y and Z}}  \Psi(x)  \left ( \int_t^s (T-r)^{\alpha -2}dr \right )^\half.
\tion
\end{enumerate}
The constants $C^y_{\ref{difference-estimates for Y and Z}}$ and
$C^z_{\ref{difference-estimates for Y and Z}}$ depend on $ K_f, L_f, C_g, c^{1,2}_{\ref{thm1}},T, p_0, b,\sigma, \kappa_q$ and $p$.
\end{thm}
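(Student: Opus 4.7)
The plan is to exploit the PDE representation $Y_r = u(r, X_r)$ and $Z_r = u_x(r, X_r)\sigma(r, X_r)$ from Theorem \ref{thm1}, the pointwise derivative bounds provided there, and stochastic-calculus arguments applied separately to $Y$ and $Z$.

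For part (i), I would start from the integral form of \eqref{BSDE2}, which gives $Y_s - Y_t = -\int_t^s f(r, X_r, Y_r, Z_r)\,dr + \int_t^s Z_r\,dB_r$. Taking the $L_p(\PP_{t,x})$-norm and using Burkholder--Davis--Gundy for the stochastic integral together with Minkowski's integral inequality yields $\|Y_s - Y_t\|_p \le C_p \bigl(\int_t^s \|Z_r\|_p^2\,dr\bigr)^{1/2} + \int_t^s \|f(r, X_r, Y_r, Z_r)\|_p\,dr$. The first summand is handled by substituting $Z_r = u_x(r, X_r)\sigma(r, X_r)$, using the boundedness of $\sigma$, the bound $|u_x(r,x)| \le c^2_{\ref{thm1}}\Psi(x)(T-r)^{-(1-\alpha)/2}$ obtained from Theorem \ref{thm1}(ii-c) (this is what the proof of Lemma \ref{F-properties} extracts), and standard SDE moment estimates giving $\|\Psi(X_r^{t,x})\|_{p'} \le C\Psi(x)$ for any $p' \in [1,\infty)$; this delivers exactly $C\Psi(x)\bigl(\int_t^s(T-r)^{\alpha-1}\,dr\bigr)^{1/2}$. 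For the drift, the Lipschitz estimate \eqref{Lipschitz} and the above bounds give $\|f(r,X_r,Y_r,Z_r)\|_p \le C\Psi(x)(1 + (T-r)^{-(1-\alpha)/2})$, and Cauchy--Schwarz absorbs the time integral into the BDG bound via $\int_t^s(T-r)^{-(1-\alpha)/2}\,dr \le \sqrt{s-t}\,\bigl(\int_t^s(T-r)^{\alpha-1}\,dr\bigr)^{1/2}$, together with $\sqrt{s-t}\le \sqrt{T}$.

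For part (ii) I would apply Itô's formula to $r \mapsto u_x(r, X_r)\sigma(r, X_r)$ on $[t,s]$. The martingale integrand is $\bigl(u_{xx}(r, X_r)\sigma(r, X_r) + u_x(r, X_r)\sigma_x(r, X_r)\bigr)\sigma(r, X_r)$; the leading term carries $u_{xx}$, and Theorem \ref{thm1}(iii-b) provides $|u_{xx}(r,x)| \le c^3_{\ref{thm1}}\Psi(x)(T-r)^{\alpha/2 - 1}$, so BDG together with Minkowski and the moment bound on $X$ give $\bigl\|\int_t^s u_{xx}(r,X_r)\sigma^2\,dB_r\bigr\|_p \le C\Psi(x)\bigl(\int_t^s(T-r)^{\alpha-2}\,dr\bigr)^{1/2}$, which is precisely the target. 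The lower-order $u_x\sigma_x$ term is dominated exactly as in part (i). The bounded-variation contribution, which a priori involves $u_{xxx}$, is handled by differentiating the semilinear PDE $u_t + \tfrac{1}{2}\sigma^2 u_{xx} + b u_x + f(r,x,u,u_x\sigma) = 0$ once in $x$: the $u_{xxx}$ terms coming from $\partial_r u_x$ and from the $\tfrac{1}{2}\sigma^2\partial_x^2(u_x\sigma)$ piece cancel, leaving a $dr$-integrand that depends only on $u,\,u_x,\,u_{xx}$ and on the coefficients and their first $x$-derivatives, which is then estimated with the same bounds and Cauchy--Schwarz as in (i).

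The principal obstacle is the rigorous application of Itô's formula to $u_x(\cdot,\cdot)\sigma(\cdot,\cdot)$, since Theorem \ref{thm1} only guarantees $u \in C^{1,2}([0,T)\times \R)$ and the bound on $u_{xx}$ blows up as $r \uparrow T$. I would handle this in the standard way by approximating $g$ with smooth functions $g^N$ satisfying \eqref{eq5} with $(C_g,p_0,\alpha)$ uniform in $N$, applying the argument to the associated FBSDEs (whose $u^N$ are $C^{1,3}$ thanks to smooth data), obtaining the above bounds with constants independent of $N$, and passing to the limit using the standard $L_p$-stability of BSDEs. This is the same strategy followed in \cite{GGG}, specialised to the setup at hand.
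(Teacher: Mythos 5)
Your part (i) is essentially the paper's argument: write $Y_s-Y_t=-\int_t^s f\,dr+\int_t^s Z_r\,dB_r$, apply BDG and Minkowski, and insert the bounds $|u(r,x)|\le c^1_{\ref{thm1}}\Psi(x)$ and $|u_x(r,x)|\le c^2_{\ref{thm1}}\Psi(x)(T-r)^{-(1-\alpha)/2}$ together with moment bounds on $X^{t,x}$. That part is fine.

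Part (ii), however, contains a genuine circularity. You base the whole estimate on the bound $|u_{xx}(r,x)|\le c^3_{\ref{thm1}}\Psi(x)(T-r)^{\frac{\alpha}{2}-1}$ from Theorem \ref{thm1}(iii-b), but in this paper that bound is \emph{derived from} Theorem \ref{difference-estimates for Y and Z}: the proof of \eqref{upper-bound} controls $\e_{t,x}[f^N(r,Y^N_r,Z^N_r)N^{t,2}_r]$ by centering it as $\e_{t,x}[(f^N(r,Y^N_r,Z^N_r)-f^N(r,Y^N_t,Z^N_t))N^{t,2}_r]$ and then invoking exactly the increment estimates $\|Y_r-Y_t\|$, $\|Z_r-Z_t\|$ you are trying to prove --- this centering is unavoidable because $\|N^{t,2}_r\|_{2}\sim (r-t)^{-1}$ is not integrable near $r=t$. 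The intended logical order is visible in the statement itself: $C^y_{\ref{difference-estimates for Y and Z}}$ and $C^z_{\ref{difference-estimates for Y and Z}}$ are allowed to depend on $c^{1,2}_{\ref{thm1}}$ but \emph{not} on $c^3_{\ref{thm1}}$, whereas $c^3_{\ref{thm1}}$ depends on $C^y_{\ref{difference-estimates for Y and Z}},C^z_{\ref{difference-estimates for Y and Z}}$. The paper instead proves (ii) by following \cite[Theorem 1, $(C4_l)\Rightarrow(C1_l)$]{GGG}: first for $f$ with bounded derivatives, via a linear BSDE for $Z$ minus its counterpart with zero generator (only first-order Malliavin weights and the terminal-condition estimate enter), tracking the $\Psi(x)$-dependence; then mollifying $f$ as in \eqref{mollified}, using an a priori $L_2$ stability estimate and uniform integrability to pass to the limit, and finally the continuity of $Z$ to recover all $t<s$. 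To rescue your route you would have to give an independent proof of the $u_{xx}$ bound, which is essentially the hard content of Theorem \ref{thm1}(iii) itself. Two further, more minor, points: under Assumption \ref{hypo2} the coefficient $\sigma$ is only $\tfrac12$-H\"older in time, so It\^o's formula cannot be applied to $u_x(\cdot,\cdot)\sigma(\cdot,\cdot)$ as a space-time function without first separating off the (harmless) time increment of $\sigma$; and to differentiate the PDE in $x$ you need $f$ differentiable, so the mollification must be applied to $f$ as well as to $g$.
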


\begin{proof}[Proof of Theorem \ref{difference-estimates for Y and Z}] (i)
First we follow  the step \cite[Theorem 1, proof of   $(C2_l) \implies (C3_l)$]{GGG}.
   We conclude  from the linear growth $ |f(r, x,y,z)| \le L_f (|x|+|y| + |z| )+  K_f $ 
      and from the Burkholder-Davis-Gundy
   inequality with constant $a_p>0$  that
\equa
&& \les \| Y_s - Y_t\|_{L_p(\PP_{t,x})}  \\&=& \left \| \int _t^s f(r,X_r, Y_r,Z_r)dr - \int _t^s Z_rdB_r \right \|_{L_p(\PP_{t,x})} \\
&\le & K_f (s-t) + L_f \int _t^s \| X_r  \|_{L_p(\PP_{t,x})}+  \|   Y_r  \|_{L_p(\PP_{t,x})}+  \| Z_r\|_{L_p(\PP_{t,x})}  dr  + a_p \left ( \int _t^s \|Z_r  \|^2_{L_p(\PP_{t,x})} dr \right )^\half.
\tion
We then use (i) and (ii) of Theorem \ref{thm1}  below to get
\equa
&& \les \| Y_s - Y_t\|_{L_p(\PP_{t,x})} \\
&\le & K_f (s-t) + C(T, L_f,c^{ {1,2}}_{\ref{thm1}}, p,  b,\sigma, p_0) \Psi(x) \bigg[ \int _t^s \Big(1 + {(T{-}r)^{\frac{\alpha-1}{2}}}\Big) dr  + \left (\int _t^s (T{-}r)^{\alpha-1} dr \right )^\half\bigg].
\tion
 (ii) Here one can follow  \cite[Theorem 1, proof of   $(C4_l) \implies (C1_l)$]{GGG}. \\
 {\bf Step 1:} We first assume additionally  that  $f:[0,T]\times \R^3
  \to \R$ is  continuously differentiable in $x,$ $y$, and $z$ with uniformly
  bounded derivatives as it was assumed for 
  \cite[Theorem 1]{GGG}. To take the dependency on $x$ into consideration which arises since we
 use  $\PP_{t,x}, $ it suffices to replace everywhere in the proof in  \cite{GGG} the constant $c_{B^\Theta_{p, \infty}}$ by $C(T,C_g, \sigma,b, p,p_0) \Psi(x).$ The constant $C^z_{\ref{difference-estimates for Y and Z}} $  depends moreover  on $L_f$ and $ \kappa_q.$  \\
 {\bf Step 2:}  Now let $f$ be as in Assumption \ref{hypo2}.
 In \cite[Theorem 1, proof of   $(C4_l) \implies (C1_l)$]{GGG} a linear BSDE is used
 which describes the behaviour of  the  process Z minus its counterpart where the generator is identically $0.$ Here the partial derivatives of $f_x, f_y,f_z$  appear but only their uniform bound 
 is needed in the estimates. Hence if $f$  satisfies \eqref{Lipschitz}, we can use mollifying as  explained  in  \eqref{mollified} below (one may choose $N=\infty$).  Since
    $|\partial_x  f^\eps(t,x,y,z)|,|\partial_y  f^\eps(t,x,y,z)|$ and $|\partial_z  f^\eps(t,x,y,z)|$ are bounded by  $ L_f$ we conclude from Step 1 that for all $\eps >0$  the process $Z^\eps$ corresponding to $ f^\eps$  
 satisfies 
 \equal \label{eps-estimate}
 \| Z^\eps_s - Z^\eps_t\|_{L_p(\PP_{t,x})} \le C^z_{\ref{difference-estimates for Y and Z}}  \Psi(x)  \left ( \int_t^s (T-r)^{\alpha -2}dr \right )^\half
\tionl
 for $p\ge 2.$    Especially, the family  $\{|Z^\eps_s - Z^\eps_t|^q: \eps>0  \}$  is then uniformly integrable provided that $q<p.$ By an a priori estimate  (cf.  \cite[Lemma 3.1]{Briand-DHPS}) we have that 
    $$ \e \int_0^T |Z_r-Z^\eps_r|^2 dr \le C    \int_0^T \sup_{x,y,z}
    |f(r,x,y,z)- f^\eps(r,x,y,z)|^2dr \le C  \eps^2 T L_f^2.  $$ 
Fubini's theorem implies that there exists  a sequence $\eps_m \to 0$ and a measurable set $N \subseteq [0,T]$ of Lebesgue measure zero, such that $\lim_{m \to \infty} \e|Z_r-Z^{\eps_m}_r|^2 =0 $   for all $r \in [0,T]\setminus N. $ Consequently, for any $q < p$  and all $t,s \in [0,T]\setminus N$ with $t<s,$
   \equa
 \| Z_s - Z_t\|_{L_q(\PP_{t,x})} \le C^z_{\ref{difference-estimates for Y and Z}}  \Psi(x)  \left ( \int_t^s (T-r)^{\alpha -2}dr \right )^\half.
 \tion  The assertion follows for all $q\ge 2$ since \eqref{eps-estimate} holds for all $p\in [2,\infty).$  Since by  Theorem \ref{thm1} (ii) the process $Z$ does have a continuous version, we finally 
 get the assertion for all $t<s.$ 
\end{proof}

\subsection{Properties of the associated PDE}
We collect in the theorem below properties of the solution to the PDE which
are mainly known. The new part concerns $\partial^2_x u$. For Lipschitz continuous $g,$  the behaviour of $\partial^2_x u$  has been studied in \cite{ZhangIII}. General results
related to this topic can be found in \cite{Crisan}.
\begin{thm}\label{thm1}
  Consider the FBSDE \eqref{BSDE2} and let  Assumptions \ref{hypo3}  and
  \ref{hypo2} hold. 
  Then for the solution $u$ of the associated PDE
 \equa
\left\{ \begin{array}{l}  u_t(t,x) +   \tfrac{\sigma^2(t,x)}{2} u_{xx}(t,x) + b(t,x) u_x(t,x) + f(t,x,u(t,x), \sigma(t,x) u_x(t,x)) =0,\\
 \hspace{30em} t\in [0,T), x\in \R, \\
 u(T,x)=g(x) , \quad x \in \R \end{array}\right . 
 \tion
   we have
  \begin{enumerate}[(i)]
  \item \label{first} $Y_t=u(t,X_t)$ where $u(t,x)=\e_{t,x} \left (g(X_T)+\int_t^T f(r,X_r,Y_r,Z_r)dr \right )$
    and $|u(t,x)|\le c^{1}_{\ref{thm1}} \Psi(x)$
    with $\Psi$ given in \eqref{Psi}, where $c^{1}_{\ref{thm1}}$ depends  on $C_g,T,p_0, L_f,K_f$ and on the bounds and  Lipschitz constants of $b$ and $\sigma.$
    \item  \label{second}  $u_x$ exists,
   \begin{align}\label{dxu}
      u_x(t,x) &=\e_{t,x}\left(g(X_T) N^{t,1}_T +\int_t^T
                          f(r,X_r,Y_r,Z_r)N^{t,1}_r dr\right), 
    \end{align}
 and
    \begin{enumerate}
    \item $u_x$ is continuous in $[0,T)\times \R,$ 
    \item  $Z^{t,x}_s= u_x(s,X_s^{t,x})\sigma(s,X_s^{t,x})$,
    \item $|u_x(t,x)|\le
      \frac{c^{2}_{\ref{thm1}} \Psi(x)}{(T-t)^{\frac{1-\alpha}{2}}}$,
    \end{enumerate}
    where $c^{2}_{\ref{thm1}}$ depends on $C_g,T,p_0,\kappa_2,L_f, K_f$   and on the bounds and  Lipschitz constants of $b$ and $\sigma.$  
  \item \label{third} $ u_{xx}$ exists,
    \begin{align}\label{d2xu}
       u_{xx}(t,x)&=\e_{t,x}\Bigg ( g(X_T) N^{t,2}_T  +\int_t^T [ f(r,X_r,Y_r,Z_r) -  f(r,X_t,Y_t,Z_t)]N^{t,2}_r dr\Bigg),
    \end{align}
    and
    \begin{enumerate}
    \item $ u_{xx}$ is continuous in $[0,T)\times \R,$
    \item $| u_{xx}(t,x)|\le
      \frac{ c^{3}_{\ref{thm1}} \Psi(x)}{(T-t)^{1-\frac{\alpha}{2}}},$
    \end{enumerate}
    where $c^{3}_{\ref{thm1}}$ depends on $C_g,T,p_0,\kappa_2,L_f,C^y_{\ref{difference-estimates for Y and Z}},C^z_{\ref{difference-estimates for Y and Z}}$ and on the bounds and  Lipschitz constants of $b$ and $\sigma.$
  \end{enumerate}
   In the following $c_{\ref{thm1}}$ represents
  $(c^{1}_{\ref{thm1}},c^{2}_{\ref{thm1}},c^{3}_{\ref{thm1}})$ and
  $c^{i,j}_{\ref{thm1}}$ ($i\neq j$) represents
  $(c^{i}_{\ref{thm1}},c^{j}_{\ref{thm1}})$, $(i,j)\in\{1,2,3\}$.
\end{thm}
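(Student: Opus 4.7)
The plan is to treat the three parts in order, each time starting from the Feynman--Kac/Malliavin weight representation and then turning the representation into the polynomial/blow-up bound by a careful Hölder-type expansion tailored to the locally $\alpha$-Hölder terminal condition. For part \eqref{first}, I would define $u(t,x):=\E_{t,x}[g(X_T)+\int_t^T f(r,X_r,Y_r,Z_r)dr]$ and check that $u$ is a classical solution of the PDE by the standard nonlinear Feynman--Kac theorem under Assumptions \ref{hypo3} and \ref{hypo2} (the forward coefficients are smooth enough to guarantee this, and $f$ is Lipschitz). The bound $|u(t,x)|\le c^{1}_{\ref{thm1}}\Psi(x)$ is then obtained from standard BSDE a priori estimates in $L^2(\PP_{t,x})$ together with the polynomial growth $|g(x)|\le \Psi(x)$ and the linear growth $|f(r,x,y,z)|\le L_f(|x|+|y|+|z|)+K_f$, and the moment bound $\E_{t,x}\sup_{s\le T}|X_s|^{p}\le C(1+|x|^p)$ for $X$.

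For part \eqref{second}, I would differentiate $u$ in $x$ through the Feynman--Kac representation using Lemma \ref{Malliavin-weights} applied both to the functional $g(X_T)$ and, after a suitable mollification of $f$ in $(x,y,z)$, to the $\int_t^T f(r,X_r,Y_r,Z_r)dr$ term (passing to the limit in the mollification using Theorem \ref{difference-estimates for Y and Z} and standard $L^2$-stability). This yields \eqref{dxu}; continuity of $u_x$ on $[0,T)\times \R$ follows from dominated convergence and the continuity of the flow $(t,x)\mapsto X^{t,x}_r$, while $Z^{t,x}_s=u_x(s,X^{t,x}_s)\sigma(s,X^{t,x}_s)$ is obtained by applying Itô to $u(s,X^{t,x}_s)$ and matching martingale parts against the BSDE. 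The bound (ii-c) is the key improvement over the merely Lipschitz case: since $\E[N^{t,1}_T]=\E[N^{t,1}_r]=0$, I would write
\begin{align*}
u_x(t,x)=\E_{t,x}[(g(X_T)-g(x))N^{t,1}_T]+\E_{t,x}\Bigl[\int_t^T \bigl(f(r,X_r,Y_r,Z_r)-f(r,x,u(t,x),0)\bigr)N^{t,1}_r\,dr\Bigr],
\end{align*}
then apply Hölder's inequality with conjugates $(p,q)$. The Hölder condition \eqref{eq5} combined with $\|X_T-x\|_p\le C\Psi(x)\sqrt{T-t}$ gives $\|g(X_T)-g(x)\|_p\le C\Psi(x)(T-t)^{\alpha/2}$, which against $\|N^{t,1}_T\|_q\le\kappa_q/\sqrt{T-t}$ produces the factor $(T-t)^{(\alpha-1)/2}$. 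The integral term is controlled by the Lipschitz continuity of $f$ and the differentials $\|X_r-x\|_p$, $\|Y_r-u(t,x)\|_p$, $\|Z_r\|_p$ (for the last, one uses the a posteriori bound from Theorem \ref{difference-estimates for Y and Z} together with an initial coarse bound on $Z$), and the integral $\int_t^T (r-t)^{-1/2}$-type singularity of $N^{t,1}_r$ is integrable.

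Part \eqref{third} is the main new contribution and also the main obstacle. I would use Lemma \ref{Malliavin-weights} again to obtain a representation for $u_{xx}$, noting crucially that $\E[N^{t,2}_r]=0$ allows us to insert the artificial subtraction of $f(r,X_t,Y_t,Z_t)$ that appears in \eqref{d2xu}; this subtraction is essential because $\|N^{t,2}_r\|_q\le\kappa_q/(r-t)$ is too singular at $r=t$ to integrate $f$ itself. For the terminal term, I would again replace $g(X_T)$ by $g(X_T)-g(x)$ and apply Hölder, gaining $(T-t)^{\alpha/2}$ against the loss $1/(T-t)$ to produce exactly $(T-t)^{-(1-\alpha/2)}$. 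The delicate estimate is the integral term: by the Lipschitz property of $f$,
\begin{align*}
\|f(r,X_r,Y_r,Z_r)-f(r,X_t,Y_t,Z_t)\|_p\le L_f\bigl(\|X_r-X_t\|_p+\|Y_r-Y_t\|_p+\|Z_r-Z_t\|_p\bigr),
\end{align*}
and Theorem \ref{difference-estimates for Y and Z} yields $\|Y_r-Y_t\|_p\lesssim\Psi(x)(\int_t^r(T-s)^{\alpha-1}ds)^{1/2}$ and $\|Z_r-Z_t\|_p\lesssim\Psi(x)(\int_t^r(T-s)^{\alpha-2}ds)^{1/2}$. To bound $\int_t^T \|\ldots\|_p/(r-t)\,dr$, I would split $[t,T]$ at $(t+T)/2$: on $[t,(t+T)/2]$ the factor $(T-s)^{\alpha-2}$ is $\le C(T-t)^{\alpha-2}$ so $\|Z_r-Z_t\|_p\le C\Psi(x)(r-t)^{1/2}(T-t)^{(\alpha-2)/2}$, which integrates against $1/(r-t)$ to $(T-t)^{(\alpha-1)/2}$; on $[(t+T)/2,T]$ the integrated expression behaves like $(T-r)^{(\alpha-1)/2}$ and the weight like $1/(T-t)$, which integrates against $1/(r-t)\sim 1/(T-t)$ to $(T-t)^{(\alpha-1)/2}/(T-t)$, both fitting the target $(T-t)^{-(1-\alpha/2)}\Psi(x)$. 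Continuity of $u_{xx}$ on $[0,T)\times\R$ then follows, as in (ii), from dominated convergence in the representation \eqref{d2xu} together with continuity of the forward flow and of the Malliavin weights.
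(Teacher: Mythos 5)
Your estimates are the right ones: centering against $\E[N^{t,i}_\cdot]=0$, pairing the $\alpha$-H\"older gain $(T-t)^{\alpha/2}$ from \eqref{eq5} against the weight singularities from Lemma \ref{Malliavin-weights}, and using Theorem \ref{difference-estimates for Y and Z} to control $\|f(r,X_r,Y_r,Z_r)-f(r,X_t,Y_t,Z_t)\|$ under the $1/(r-t)$ singularity of $N^{t,2}_r$. This reproduces (ii-c) and the bound (iii-b) essentially as in the paper (your midpoint split is just the Beta-function computation of Lemma \ref{beta-function} done by hand). There is, however, a genuine gap in part \eqref{third}: you never establish that $u_{xx}$ \emph{exists} and that \eqref{d2xu} is a valid identity, which is the actual content of the new result. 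Lemma \ref{Malliavin-weights} differentiates $x\mapsto\E H(X_R^{t,x})$ for a fixed polynomially bounded Borel $H$; here the running term has integrand $f(r,X_r,u(r,X_r),\sigma(r,X_r)u_x(r,X_r))$, which involves the unknown $u$ and $u_x$ themselves (unbounded, blowing up as $r\to T$), and even granting $\partial_x^2$ of each time-slice, one must still justify exchanging $\partial_x^2$ with $\int_t^T dr$ and identify the resulting function $v$ with the second derivative of $u$ --- none of which follows from "dominated convergence and continuity of the flow". The paper resolves this by mollifying \emph{and truncating} both $g$ and $f$ (Lemma \ref{lem1}), invoking Delarue--Menozzi/Gobet--Labart to get that the approximating $u^N$ is genuinely $C^{1,2}$ with bounded derivatives, proving the uniform bound \eqref{eq14}, and then identifying the limit via $\partial_x u^N(t,x)-\partial_x u^N(t,y)=\int_y^x\partial^2_x u^N(t,z)\,dz$ together with \emph{locally uniform} convergence $\partial_x^2 u^N\to v$.

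That locally uniform convergence is the second missing ingredient: your appeal to ``standard $L^2$-stability'' gives $\E\int_0^T|Z_r-Z^N_r|^2dr\to 0$, but to pass to the limit inside the representation you need a pointwise-in-$r$ rate that survives multiplication by the singular weight, namely $\|Y^N_r-Y_r\|_{L_2(\PP_{s,y})}+\|Z^N_r-Z_r\|_{L_2(\PP_{s,y})}\le \delta_1/\sqrt{T-r}$ uniformly over a neighbourhood of $(t,x)$. This is Proposition \ref{prop1} in the paper, and its proof is itself nontrivial (a Gronwall argument with singular kernel, plus the truncation estimates \eqref{eq12}--\eqref{eq11} which rely on the already-proved bound (ii-c)). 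A smaller remark on part \eqref{second}: for the integral term you propose bounding $\|Z_r\|_p$ via Theorem \ref{difference-estimates for Y and Z}, but that theorem only controls increments and its proof already uses Theorem \ref{thm1}(i)--(ii); to avoid circularity you should, as the paper does, use the coarse a priori bounds $|u|\le c\Psi$ and $|u_x|\le c\Psi(T-t)^{-1/2}$ from Zhang's Lipschitz-case result, which suffice since $\int_t^T(T-r)^{-1/2}(r-t)^{-1/2}dr$ is bounded.
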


\begin{proof} \eqref{first}: This follows from \cite[Theorem 3.2]{ZhangII}. \\
\eqref{second}: From the proof of \cite[Theorem 3.2]{ZhangII}, we get \eqref{dxu}. 
 The points \eqref{second}$(a)$ and $(b)$ ensue from \cite[Theorem 3.2 (i)]{ZhangII}. It remains to prove  $(c)$.\\
{\bf Proof of  \eqref{second} (c):} We show the assertion for a generator not depending on $X,$ since the terms arising from that dependency would be easy to treat. 
Since  $ \e_{t,x} ( \e_{t,x} (g(X_T)) N_T^{t,1})=0$  we can subtract it from the right hand side of \eqref{dxu} and get 
    \begin{align*}
      \partial_x
      u(t,x)=\e_{t,x}\left( [g(X_T)-\e_{t,x} (g(X_T))]  N_T^{t,1} +\int_t^T
      f(r,Y_r,Z_r)  N_r^{t,1} dr\right ).
    \end{align*}
It holds    
    \begin{align*}
 \E_{t,x} | g(X_T) - \e_{t,x} g(X_T)|^2 
 &= \e_{t,x} |g(X_T) - \tilde \e g(\tilde X^{t,X_t}_T)|^2  \le  \e_{t,x}\tilde \e |g(X_T) -  g(\tilde X^{t,X_t}_T)|^2,
 \end{align*}
 and thanks to the Cauchy-Schwarz inequality  with $\Psi_1= C_g(1+|X_T  |^{p_0}+ |\tilde X^{t,X_t}_T |^{p_0})$ and equation \eqref{eq5},
\begin{align} \label{terminal-est}
 \e_{t,x}\tilde \e |g(X_T) -  g(\tilde X^{t,X_t}_T)|^2 &\le \e_{t,x} \tilde \e (\Psi_1^2 |X_T - \tilde X^{t,X_t}_T |^{2\alpha}) \notag \\
&\le  \left[\e_{t,x} \tilde \e  \Psi_1^{4} \right]^{\frac{1}{2}} \left[ \e_{t,x}\tilde \e |X_T - \tilde X^{t,X_t}_T |^{4\alpha} \right]^{\frac{1}{2}} \notag \\
 &\le C(C_g ,T,p_0, b,\sigma)  \Psi^2(x)  (T-t)^{\alpha}.
\end{align}
 Relation  \eqref{norm-weight}  and the Lipschitz  continuity of $f$ imply
    \begin{align}\label{eq1}
      |\partial_x
      u(t,x)|\le&\frac{C(C_g,T,p_0, \kappa_2,b,\sigma) \Psi(x)}{(T-t)^{\frac{1-\alpha}{2}}}  \notag \\
      &+C(L_f , K_f )  \e_{t,x} \int_t^T
      (1+  |u(r,X_r)| 
      +  |\partial_x u (r,X_r)\sigma(r,X_r)| )|N_r^{t,1}| dr.
    \end{align} 
    Since we have $|g(x)|\le \Psi(x) $, \cite[Theorem 3.2
    (ii)]{ZhangII} gives $| u(t,x)|\le c \Psi(x)$ and $|\partial_x
    u(t,x)|\le c\Psi(x)(T-t)^{-1/2}$, where c depends on $T, L_f , K_f, \kappa_2, b,\sigma$ and $p_0.$
      Hence inequality \eqref{eq1}
    becomes
    \begin{align*}
      |\partial_x
      u(t,x)| & \le \frac{C(C_g,T,p_0, \kappa_2,b,\sigma) \Psi(x)}{(T-t)^{\frac{1-\alpha}{2}}}+C(L_f , K_f,c, \sigma) \e_{t,x}\left(\int_t^T
                       \left(1+\Psi(X_r)+\frac{\Psi(X_r)}{(T-r)^\half} \right)|N_r^{t,1}| dr\right)\\
      & \le \frac{C(C_g,T,p_0, \kappa_2,b,\sigma) \Psi(x)}{(T-t)^{\frac{1-\alpha}{2}}}+C( T, L_f , K_f, \kappa_2,b,\sigma, p_0)
       \int_t^T
                       \frac{ \Psi(x)}{(T-r)^\half (r-t)^\half} dr \\
        &\le \frac{C(C_g,T,p_0, \kappa_2,  L_f , K_f, b,\sigma) \Psi(x)}{(T-t)^{\frac{1-\alpha}{2}}}.
    \end{align*}
\smallskip \\

\eqref{third}:  We start with an approximation of  $g$ and $f$ by smooth and bounded functions.   
Let $\phi$ be a
non-negative $C^{\infty}$ function with support $[-1,1],$  such that $\int_{\R} \phi(u)
du =1,$ and $\eps \in (0,1].$ For $N \in \nset$ let   $b_N :\R \to [-N-1,N+1]$ be a  monotone $C^{\infty}$ function such that  $0\le b'_N(x) \le 1$ and
\equa
b_N(x) := \left \{ \begin{array}{cl} N+1, &\,\, x>N+2,\\
x,  & \,\, |x| \le  N, \\
-N-1, &\,\, x< -N-2.
\end{array} \right .
\tion
Define 
\equa
g^{\eps,N}(x)=\int_{-1}^1 \phi(u)  g(b_N(x)-\eps u) du
\tion
and 
\equal  \label{mollified}
    f^{\eps,N}(r,y,z)=\int_{-1}^1 \int_{-1}^1 \phi(u) \phi(v) f(r,b_N(y)-\eps u, b_N(z)-\eps v) du dv.
\tionl

    \begin{lemme}\label{lem1}
 $g^{\eps,N}$ and $f^{\eps,N}$ satisfy

 \begin{enumerate}[(a)]
   \item $\|g^{\eps,N}\|_{\infty} + \|f^{\eps,N}\|_{\infty} \le C= C(\eps, N)$ for some $C(\eps, N)>0,$
   \item $g^{\eps,N}$ and $f^{\eps,N}$  are $C^{\infty}$ functions, with bounded derivatives (the bounds depend on $\eps$ and $N$). Moreover, $f^{\eps,N}$ is a Lipschitz function in $y$ and $z$, with Lipschitz constant $L_f,$
   \item $g^{\eps,N}$ satisfies  \eqref{eq5}, uniformly in $\eps \in (0,1)$ and $N\ge 1,$
    \item for all $x \in \R$  and $\eps \in [0,1]$, we have $|g^{\eps,N}(x)-g(x)|\le  C(C_g) \Psi(x) (\eps^{\alpha}+\frac{|x|^{\alpha+1}}{N}),$
    \item for all $r \in [0,T]$ and for all $(y,z) \in \R^2$, we have $$|f^{\eps,N}(r,y,z) -f(r,y,z)|\le L_f(2\eps+|b_N(y)-y|+|b_N(z)-z|).$$
   \end{enumerate}
\end{lemme}

\begin{proof}
\begin{enumerate}[(a)]
  \item Since $g$ is locally Hölder continuous  in the sense of \eqref{eq5}, $|g(x)|\le C_g(1+|x|^{p_0+1}).$
  Then, we get $|g^{\eps,N}(x)|\le C_g(1+(N+1+\eps)^{p_0+1}),$  and for $f$ being Lipschitz continuous in $y$ and $z$, uniformly in time, the same type of result applies.
  \item Since $\phi$ is a $C^{\infty}$ function and $f$ and $g$ are of polynomial growth, we get the result.
  \item Since $g$ is locally Hölder continuous, we get
  \begin{align*}
    |g^{\eps,N}(x)-g^{\eps,N}(y)]
    &\le \int_{-1}^1 |\phi(u)| C_g(1 + |b_N(x)-\eps u|^{p_0}+ |b_N(y)-\eps u|^{p_0} )|b_N(x)-b_N(y)|^{\alpha}du\\
    &\le \int_{-1}^1C_g |\phi(u)|(1+(|x|+\eps)^{p_0}+ (|y|+\eps)^{p_0}  )|x-y|^{\alpha}du\\
    & \le C(C_g)(1+|x|^{p_0}+ |y|^{p_0} )|x-y|^{\alpha}.
  \end{align*}
  \item We have   
  \begin{align*}
    |g^{\eps,N}(x)-g(x)|&= \left |\int_{-1}^1\phi(u)(g(b_N(x)-\eps u)-g(x))du \right |\\
    & \le C_g \int_{-1}^1 |\phi(u)| (1+|b_N(x)|^{p_0}+\eps^{p_0}+|x|^{p_0})(|b_N(x)-x|^{\alpha}+\eps^{\alpha})du\\
    & \le C(C_g)(1+|x|^{p_0})(\eps^{\alpha}+|x|^{\alpha}\ind_{|x| \ge N}),
  \end{align*}
  and the result follows.
  \item We simply have to apply the Lipschitz property of $f$ to get the result.
\end{enumerate}
\end{proof}
\smallskip
  
 We put now $\eps:= \tfrac{1}{N}$ and write $(g^N,f^N)$ instead of $(g^{\frac{1}{N},N},f^{\frac{1}{N},N})$ in order to simplify the notation 
    and consider the BSDE
    \begin{align*}
      Y^N_t=g^N(X_T)+\int_t^T f^N(r,Y^N_r,Z^N_r)dr -\int_t^T Z^N_r dB_r.
    \end{align*}
{\bf Representation for  $\partial^2_x u^N(t,x).$} \\  By  \eqref{first}  we have that
 \begin{align*}
      u^N(t,x)&=\e_{t,x} g^N(X^{t,x}_T)  +\int_t^T\e_{t,x}   f^N(r,Y^N_r,Z^N_r)  dr.    \end{align*}
According to Lemma \ref{Malliavin-weights} it holds that  $\partial^2_x \, \e_{t,x} g^N(X_T)= \e_{t,x}  [ g^N(X_T)N_T^{t,2}] $ and
 $$\partial^2_x \,  \e_{t,x}  f^N(r,Y^N_r,Z^N_r) =  \e_{t,x} [  f^N(r,Y^N_r,Z^N_r)N_r^{t,2} ], $$
because    
$$ f^N(r, Y^N_r,Z^N_r) =  f^N(r,u^N(r,X_r),\sigma(r,X_r)  u_x^N(r,X_r)),$$
and $ f^N(r,y,z) $ is continuous and bounded.
   Moreover,    \cite[Proposition 4]{GobLab}  (or  \cite[Theorem 2.1]{Delarue_and_Menozzi}) implies that   $u^N(r,x)$
     is  $C^{1,2}$ and it  holds  that  $|u^N(r,x)|+|\partial_x u^N(r,x)|+|\partial_x^2
    u^N(r,x)|\le C^N$ for some $C^N>0.$  Since $\sigma$ is  continuous,  $$(r,x) \mapsto f^N(r, u^N(r,x),\sigma(r,x)  u_x^N(r,x))$$
    is a bounded Borel function.  Notice that by Lemma \ref{Malliavin-weights} 
 \begin{align} \label{E-of-N}
\e_{t,x} [N^{t,2}_r]=0 \quad  \text{and} \quad    \e_{t,x} [(N^{t,2}_r)^2] \le
   \frac{\kappa_2^2}{(r-t)^2},
\end{align}  
 so that   
$$ \e_{t,x} [ f^N(r, Y^N_r,  Z^N_r) N^{t,2}_r]= \e_{t,x} ([ f^N(r, Y^N_r,  Z^N_r)  -  f^N(r,Y^N_t,  Z^N_t)]N^{t,2}_r).$$   
Using the Lipschitz continuity of $f^N$ (see Lemma \ref{lem1}), the inequality of Cauchy-Schwarz  and  
Theorem \ref{difference-estimates for Y and Z} one can derive   the  upper  bound
\equal \label{upper-bound}
&& \less |\partial^2_x \,  \e_{t,x}  f^N(r,Y^N_r,Z^N_r)|   \notag \\
& \le& \e_{t,x} [ | f^N(r,Y^N_r,  Z^N_r)  -  f^N(r, Y^N_t,  Z^N_t)| |N^{t,2}_r|] \notag \\
&\le& C(L_f, \kappa_2)( \e_{t,x} (| Y^N_r -  Y^N_t | ^2+
|Z^N_r- Z^N_t) |^2))
^\half \,  \frac{1}{r-t} \notag  \\
&\le& C(L_f, \kappa_2,C^y_{\ref{difference-estimates for Y and Z}},C^z_{\ref{difference-estimates for Y and Z}})   \left   [  \left ( \int_t^r (T-s)^{\alpha-1}ds \right )^\half   
+  \left ( \int_t^r (T-s)^{\alpha-2}ds \right )^\half \right ] \, \frac{ \Psi(x)}{r-t}   \notag  \\
&\le& C(T,L_f, \kappa_2,C^y_{\ref{difference-estimates for Y and Z}},C^z_{\ref{difference-estimates for Y and Z}}) \Psi(x)\frac{1}{(T-r)^{1-\frac{\alpha}{2}} (r-t)^\half}.
\tionl
By this we do have an integrable bound for the derivative, and by dominated convergence we get   
 \equa  \partial^2_x \int_t^T\e_{t,x}
      f^N(r,Y^N_r,Z^N_r)    dr  &=&  \int_t^T\partial^2_x \e_{t,x}  f^N(r,Y^N_r,Z^N_r)   dr \\
 &=& \int_t^T       \e_{t,x} \{[ f^N(r, Y^N_r,  Z^N_r)  -  f^N(r, Y^N_tZ^N_t)]N^{t,2}_r\} dr.
 \tion     
Hence we can  write (using Fubini's theorem for the integral) 
\begin{align*}
      \partial^2_x u^N(t,x)&=\e_{t,x}\left(
      g^N(X_T) N^{t,2}_T +\int_t^T [ f^N(r, Y^N_r,  Z^N_r) - f^N(r, Y^N_t,  Z^N_t)]  N^{t,2}_r dr\right).
       \end{align*}
{\bf Convergence of $\partial^2_x u^N(t,x).$}
 Since   $\e_{t,x}[\e_{t,x}(g^N(X_T)) N^{t,2}_T ] =0,$  Cauchy-Schwarz's inequality  and the local Hölder continuity of $g^N$ (see Lemma \ref{lem1}) 
give like in \eqref{terminal-est} that
   \begin{align*}
 | \e_{t,x} (g^N(X_T)N^{t,2}_T)| &= \left  | \e_{t,x} \left([g^N(X_T)-\e_{t,x}(g^N(X_T))] N^{t,2}_T\right) \right | \\
      & \le \left (\e_{t,x} (|g^N(X_T)-\e_{t,x}(g^N(X_T))|^2 )\right )^\half \, \frac{\kappa_2}{T-t} \\
      &\le C(C_g, T, p_0,\kappa_2,b,\sigma)\frac{\Psi(x) }{(T-t)^{1-\frac{\alpha}{2}}},
   \end{align*}
  for all $N \in \nset.$
  For the second term we can use the upper bound \eqref{upper-bound} and Lemma
  \ref{beta-function} to get
  \begin{align*}
   \e_{t,x} \int_t^T\left | [ f^N(r, Y^N_r,  Z^N_r) -
    \right. & \left. f^N(r, Y^N_t,  Z^N_t)]  N^{t,2}_r\right | dr \notag\\
     & \le  C(T, L_f, \kappa_2 ,C^y_{\ref{difference-estimates for Y and Z}},C^z_{\ref{difference-estimates for Y and Z}})  \int_t^T  \frac{\Psi(x)}{(T-r)^{1-\frac{\alpha}{2}}
       (r-t)^\half}dr,\notag\\
    &\le C(T, L_f, \kappa_2 ,C^y_{\ref{difference-estimates for Y and Z}},C^z_{\ref{difference-estimates for Y and Z}})\Psi(x) \frac{B(\frac{\alpha}{2},\half)}{(T-t)^{\half-\frac{\alpha}{2}}}, 
  \end{align*}
  which implies
  \begin{align}\label{eq14}
    |  \partial^2_x u^N(t,x) | & \le C(C_g, T, L_f, p_0, \kappa_2,C^y_{\ref{difference-estimates for Y and Z}},C^z_{\ref{difference-estimates for Y and Z}},  b,\sigma ) \frac{   \Psi(x)  }{(T-t)^{1-\frac{\alpha}{2}}}.
  \end{align}
 According to   \cite[Theorem 2.1]{Delarue_and_Menozzi}  $ \partial^2_x u^N(t,x) $ is continuous.
  Let
 \begin{align*}
      v(t,x)&:=\e_{t,x}\left(
      g(X_T) N^{t,2}_T +\int_t^T [ f(r, Y_r,  Z_r) - f(r, Y_t,  Z_t)]  N^{t,2}_r dr\right). 
    \end{align*}
We show that  for any $(t,x)  \in [0,T)\times \rset$ it holds   $\partial^2_x u^N(t,x) \to v(t,x)$  if  $N \to \infty,$
and that $v$ is continuous  on $[0,T)\times \rset$.  The idea to show continuity of $v$ is as follows:  If $(t_n,x_n) \to (t,x),$ then we may assume
that  we can find a $\delta >0$ such that  $x_n \in (x-\delta, x+ \delta)$ and $t_n \in (t-\delta, t+ \delta) \subseteq [0,T)$ for each sufficiently large $n.$ We consider
\equa
|v(t_n,x_n) - v(t,x)| &\le&  |v(t_n,x_n) - \partial^2_x u^N (t_n,x_n)| +
|\partial^2_x u^N (t_n,x_n) -\partial^2_x u^N (t,x)| \\&& + |\partial^2_x u^N (t,x) - v(t,x)|.
\tion
Since $\partial^2_x u^N$ is continuous, the term $|\partial^2_x u^N (t_n,x_n) -\partial^2_x u^N (t,x)|$ is small for large $n.$ 
Hence it  suffices to show that  $\sup_{s\in(t-\delta, t+ \delta), y \in (x-\delta, x+ \delta)}  |\partial^2_x u^N (s,y) - v(s,y)|$ is small for large $N.$  
Let $(s,y) \in  (t-\delta, t+ \delta)\times (x-\delta, x+ \delta).$ It holds
\begin{align*}
&|\partial^2_x u^N(s,y) - v(s,y)| \le \e_{s,y}| [g^N(X_T)   - g(X_T) ]N^{s,2}_T  |+  \int_s^T D^\half(r,s) \frac{\kappa_2}{r-s}dr := D_1+D_2,
\end{align*}
where (setting $\|\cdot\|_{\p_{s,y}}:= \|\cdot\|_{L_2(\p_{s,y})}$)
\equa
D(r,s)&:=&  \|  f^N(r, Y^N_r,  Z^N_r) - f^N(r, Y^N_s,  Z^N_s)
- [ f(r, Y_r,  Z_r) - f(r, Y_s,  Z_s)] \|^2_{\p_{s,y}} \\
&\le&  L_f ( \|Y^N_r- Y^N_s \|_{\p_{s,y}}   +\|  Z^N_r- Z^N_s \|_{\p_{s,y}} + \|Y_r- Y_s \|_{\p_{s,y}}  +\|  Z_r- Z_s \|_{\p_{s,y}} ) \\
&&\times (\| f^N(r, Y^N_r,  Z^N_r) - f(r, Y_r,  Z_r)  \|_{\p_{s,y}} + \|  f^N(r, Y^N_s,  Z^N_s) - f(r, Y_s,  Z_s)  \|_{\p_{s,y}}).
\tion
First, let us bound $D_1$.  According to Cauchy-Schwarz's  inequality, \eqref{eq10}  below and \eqref{E-of-N} we get
\begin{align*}
  D_1 \le \delta_1\sqrt{\e_{s,y}(|N^{s,2}_T|^2)}\le \frac{\delta_1\kappa_2}{T-s}\ \le
  \frac{\delta_1 \kappa_2}{T-t-\delta}.
\end{align*}
Now let us bound $D_2$. According to Theorem \ref{difference-estimates for Y and Z} 
 it holds
\begin{align*}
D^\half(r,s)\le& C( T,L_f,C^y_{\ref{difference-estimates for Y and Z}},C^z_{\ref{difference-estimates for Y and Z}})  \Psi^{\half}(y)   \frac{(r-s)^\frac{1}{4}}{       (T-r)^{\half-\frac{\alpha}{4}} } \\
&\times(\| f^N(r, Y^N_r,  Z^N_r) - f(r, Y_r,  Z_r)
  \|_{\p_{s,y}}  + \|  f^N(r, Y^N_s,  Z^N_s) - f(r, Y_s,  Z_s)  \|_{\p_{s,y}})^\frac{1}{2} .\\
\end{align*}
Then, using \eqref{eq8}, \eqref{eq12}, \eqref{eq11} and Proposition \ref{prop1} below gives
\begin{align*}
D^\half(r,s)\le& C(T, L_f, \kappa_2,C^y_{\ref{difference-estimates for Y and Z}},C^z_{\ref{difference-estimates for Y and Z}}) \Psi(y)   \frac{(r-s)^\frac{1}{4}}{    (T-r)^{\half-\frac{\alpha}{4}}   }     \frac{\delta_1 }{(T-r)^{\frac{1}{4}}}.
\end{align*}

Hence we have shown that
\begin{align*}
  D_2 &\le  C(T, L_f ,\kappa_2,C^y_{\ref{difference-estimates for Y and Z}},C^z_{\ref{difference-estimates for Y and Z}}) \Psi(y) \int_s^T\frac{\delta_1}{(r-s)^{\frac{3}{4}}(T-r)^{\frac{1}{2}-\frac{\alpha}{4}+\frac{1}{4}}} dr \\
 &\le C(T, L_f ,\kappa_2 ,C^y_{\ref{difference-estimates for Y and Z}},C^z_{\ref{difference-estimates for Y and Z}}) \Psi(y)  \frac{\delta_1}{(T-s)^{\frac{1}{2}-\frac{\alpha}{4}}} ,\\
 &\le C(T, L_f ,\kappa_2 ,C^y_{\ref{difference-estimates for Y and Z}},C^z_{\ref{difference-estimates for Y and Z}})  \Psi(x+\delta)  \frac{\delta_1}{(T-t-\delta)^{\frac{1}{2}-\frac{\alpha}{4}}} \quad   \forall  (s,y) \in  (t-\delta, t+ \delta)\times (x-\delta, x+ \delta).
\end{align*}
Consequently,  $\sup_{y \in (x-\delta, x+ \delta), s\in(t-\delta, t+ \delta)}  |\partial^2_x u^N (s,y) - v(s,y)|$ is small for  large  $N,$  hence $v$ is continuous.
Since
$$
  \partial_x u^N(t,x) -   \partial_x u^N(t,y) = \int_y^x \partial^2_x u^N(t,z) dz
$$
converges to
$$
  \partial_x u(t,x) -   \partial_x u(t,y) = \int_y^x  v(t,z) dz,
$$
it follows that $\partial^2_x u(t,x) =v(t,x).$ Then point (iii-a) and \eqref{d2xu} are proved. Since $\partial_x^2 u^N$ converges to $v$ for  $N \to \infty$, we deduce 
point (iii-b) from \eqref{eq14}.
\end{proof}

 \begin{prop}\label{prop1} Let Assumptions \ref{hypo2}  and
  \ref{hypo3} hold.
      Then for any  $(s,y) \in  (t-\delta, t+ \delta)\times (x-\delta, x+ \delta)$  with $t + \delta <T$   and  $r$  such that $s\le r < T$ we have
     \begin{align*}
        \|Y^N_r- Y_r\|_{L_2(\PP_{s,y})} +\|Z^N_r- Z_r\|_{L_2(\PP_{s,y})} \le \frac{\delta_1}{\sqrt{T-r}},
     \end{align*}
where $\delta_1$ denotes a generic constant which tends to $0$ when $N$ tends to $+\infty$.
   \end{prop}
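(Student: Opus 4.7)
The plan is to estimate $\|Y^N_r-Y_r\|_{L_2(\PP_{s,y})}$ and $\|Z^N_r-Z_r\|_{L_2(\PP_{s,y})}$ separately. The $Y$-bound will be much stronger than required and follows from standard BSDE stability, while the $Z$-bound requires the Malliavin representation, from which the $(T-r)^{-1/2}$ singularity naturally emerges.

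First I would apply the classical $L_2$ a priori estimate for BSDEs (cf.\ \cite{Briand-DHPS}, Lemma~3.1) on $[s,T]$ under $\PP_{s,y}$ to the pairs $(g,f)$ and $(g^N,f^N)$:
\[\E_{s,y}|Y^N_r-Y_r|^2 + \E_{s,y}\!\!\int_s^T\!\!|Z^N_v-Z_v|^2\,dv \le C\Big(\E_{s,y}|g^N(X_T)-g(X_T)|^2 + \E_{s,y}\!\!\int_s^T\!\!|(f^N-f)(v,Y_v,Z_v)|^2\,dv\Big).\]
The terminal term vanishes in $N$ by Lemma~\ref{lem1}(d) together with polynomial moments of $X_T$ (uniform for $y$ in $(x-\delta,x+\delta)$). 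For the integrand, Lemma~\ref{lem1}(e) and the pointwise bounds $|Y_v|\le c\Psi(X_v)$ and $|Z_v|\le c\Psi(X_v)/(T-v)^{(1-\alpha)/2}$ from Theorem~\ref{thm1}(i),(ii-c) (the singularity $(T-v)^{-(1-\alpha)}$ being integrable on $[s,T]$) combine with dominated convergence to give an $O(\delta_1^2)$ bound with $\delta_1=\delta_1(N)\to 0$. This already yields $\|Y^N_r-Y_r\|_{L_2(\PP_{s,y})}\le\delta_1$ and the integrated bound $\int_r^T\|\delta Z_v\|_{L_2(\PP_{s,y})}^2\,dv\le\delta_1^2$.

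For the pointwise $Z$-bound, Theorem~\ref{thm1}(ii-b) gives $Z^N_r-Z_r=\sigma(r,X_r)[u^N_x(r,X_r)-u_x(r,X_r)]$, and since $\sigma$ is bounded I focus on the PDE gradient difference. Using \eqref{dxu} for $u_x$ and for $u^N_x$ (the latter being justified by the smoothness of $g^N,f^N$),
\[u^N_x(r,X_r)-u_x(r,X_r) = \E_r\big[(g^N-g)(X_T)\,N^{r,1}_T\big] + \E_r\!\int_r^T\!\Delta f(v)\,N^{r,1}_v\,dv,\]
where $\Delta f(v):=f^N(v,Y^N_v,Z^N_v)-f(v,Y_v,Z_v)$. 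Conditional Cauchy--Schwarz with $\|N^{r,1}_T\|_{L_2(\PP_r)}\le\kappa_2/\sqrt{T-r}$ and the tower property bound the terminal contribution by $\kappa_2\delta_1/\sqrt{T-r}$ in $L_2(\PP_{s,y})$, which already has the right shape.

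The main obstacle is the generator integral. Splitting $\Delta f=\Delta^1 f+\Delta^2 f$ with $\Delta^1 f(v)=(f^N-f)(v,Y_v,Z_v)$ and $|\Delta^2 f(v)|\le L_f(|\delta Y_v|+|\delta Z_v|)$, conditional Cauchy--Schwarz combined with Minkowski gives the bound $\kappa_2\int_r^T\|\Delta f(v)\|_{L_2(\PP_{s,y})}(v-r)^{-1/2}\,dv$. The $\Delta^1 f$-piece and the $\delta Y$-piece vanish as $N\to\infty$ using Step~1 and dominated convergence (integrable beta-type kernels, Lemma~\ref{beta-function}). The delicate point is $\int_r^T\|\delta Z_v\|_{L_2}(v-r)^{-1/2}\,dv$: the $L_2$-integrated control on $\delta Z$ is not enough, since naive Cauchy--Schwarz yields a logarithmic divergence at $v=r$. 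I would close this by interpolating with the $N$-uniform pointwise bound $\|\delta Z_v\|_{L_2(\PP_{s,y})}\le C(y)/(T-v)^{(1-\alpha)/2}$ (from Theorem~\ref{thm1}(ii-c) applied to both $Z$ and $Z^N$, with $N$-independent constants thanks to Lemma~\ref{lem1}(c)): writing $\|\delta Z_v\|=\|\delta Z_v\|^{1/2}\|\delta Z_v\|^{1/2}$ and applying H\"older with exponents $(4,4/3)$ yields
\[\int_r^T\frac{\|\delta Z_v\|_{L_2}}{\sqrt{v-r}}\,dv \le \Big(\!\int_r^T\!\|\delta Z_v\|_{L_2}^{2}\,dv\Big)^{1/4}\!\Big(\!\int_r^T\!\frac{C(y)^{2/3}\,dv}{(T-v)^{(1-\alpha)/3}(v-r)^{2/3}}\Big)^{3/4} \le C\,\delta_1^{1/2}\,(T-r)^{\alpha/4},\]
the second factor being a convergent beta-type integral bounded uniformly on $[s,T)$. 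Hence the generator integral contributes $O(\delta_1^{1/2})$ to $\|\delta Z_r\|_{L_2(\PP_{s,y})}$. Combining with the terminal contribution of Step~3 and the $Y$-estimate of Step~1, and absorbing the bounded factors $\sqrt{T-r}\le\sqrt{T}$ into a new generic constant $\delta_1'(N)\to 0$, yields the claimed bound $\|Y^N_r-Y_r\|+\|Z^N_r-Z_r\|\le \delta_1'/\sqrt{T-r}$.
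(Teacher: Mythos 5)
Your argument is correct, but it closes the loop differently from the paper. The paper bounds both differences directly from the equations ($\|Y^N_r-Y_r\|$ by the trivial pathwise estimate, $\|Z^N_r-Z_r\|$ via the representation \eqref{dxu}), feeds the Lipschitz bound $\|\Delta f(w)\|\le L_f(S(w)+\tfrac2N+\|b_N(Y_w)-Y_w\|+\|b_N(Z_w)-Z_w\|)$ back into both, and obtains a single singular Volterra inequality $S(r)\le \tfrac{C\delta_1}{\sqrt{T-r}}+CL_f\int_r^T\tfrac{S(w)}{\sqrt{w-r}}dw$ for $S(r)=\|\delta Y_r\|+\|\delta Z_r\|$, which it resolves with the singular Gronwall lemma of \cite[Lemma 3.1]{SY_09}. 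You instead decouple the problem: the standard $L_2$ a priori estimate of \cite{Briand-DHPS} gives you $\sup_v\|\delta Y_v\|\le\delta_1$ and $\int_s^T\|\delta Z_v\|^2dv\le\delta_1^2$ up front, and then a \emph{single} pass through the representation \eqref{dxu} suffices, the only delicate term $\int_r^T\|\delta Z_v\|(v-r)^{-1/2}dv$ being handled by interpolating the integrated bound against the $N$-uniform pointwise bound $\|\delta Z_v\|\le C\Psi(y)(T-v)^{-(1-\alpha)/2}$ (which, as you correctly note, requires the uniformity in $N$ of the constants in Theorem \ref{thm1}(ii-c), guaranteed by Lemma \ref{lem1}(c)). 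Both routes rest on the same ingredients — Lemma \ref{lem1}(d),(e), the truncation estimates of the type \eqref{eq12}--\eqref{eq11}, the weight bound \eqref{norm-weight} and Lemma \ref{beta-function} — but yours trades the singular Gronwall lemma for a Hölder interpolation, at the harmless cost of degrading the generic vanishing constant from $\delta_1$ to $\delta_1^{1/2}$ in the generator contribution. One small point to make explicit if you write this up: the justification that the representation \eqref{dxu} holds for the non-smooth pair $(g,f)$, not only for $(g^N,f^N)$, is exactly Theorem \ref{thm1}(ii), so you are on the same footing as the paper there.
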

\begin{proof}
   Let here  $\| \cdot \|$ stand for  $\| \cdot \|_{L_2(\PP_{s,y})}.$  We will use for the  $Y$ differences the  inequality
     \equa   \|Y^N_r- Y_r\|  \le \|g^N(X_T)-g (X_T) \|
        +      \int_r^T  \| f^N(w,Y^N_w,Z^N_w)-  f(w,Y_w,Z_w)\|  dw.
       \tion
     For the $Z$ differences we get by \eqref{dxu}  and  (\ref{second}-b)
     \equa
     && \les \|Z^N_r- Z_r\| \\
 &\le&  C(\sigma) \left (\left \| \e_r \left(g^{N }(X_T)- g(X_T) \right) N_T^{r,1} \right \|  
      +  \left \| \e_r  \int_r^T (f^N(w,Y^N_w,Z^N_w)-  f(w,Y_w,Z_w))N_w^{r,1}dw\right \|\right ) \\
       &\le&  C(\kappa_2,\sigma)  \left ( \frac{\|g^N(X_T)-g (X_T) \|}{\sqrt{T-r}  }  
        +     \int_r^T  \| f^N(w,Y^N_w,Z^N_w)-  f(w,Y_w,Z_w)\|    \frac{1}{\sqrt{w-r}} dw\right ). 
        \tion
Let $S(r):=\|Y^N_r- Y_r\| +   \|Z^N_r- Z_r\|$. Using the inequality $ (1+  \frac{1}{\sqrt{w-r}}) \le C(T)   \frac{1}{\sqrt{w-r}}$   for $r < w \le T$ gives
\begin{align}
  S(r)\le \,& C(T,\kappa_2, \sigma) \left( \|g^N(X_T)-g (X_T) \|  \frac{1}{\sqrt{T-r}} + \int_r^T  \| f^N(w,Y^N_w,Z^N_w)-  f(w,Y_w,Z_w)\| \frac{1}{\sqrt{w-r}} dw\right ). \label{eq9}
\end{align}
Let us bound $\|g^N(X_T)-g (X_T) \|$. By  Lemma \ref{lem1} we get the estimate
\begin{align}
 \e_{s,y}| g^N(X_T)  - g(X_T)|^2 & \le C(C_g)   \e_{s,y} \left (\Psi(X_T)^4 \right)^\half   \left(\e_{s,y} \left(\frac{1}{N^{\alpha}}+ \frac{|X_T|^{\alpha +1}}{N}\right)^4 \right)^\half \notag\\
  & \le C(C_g,T ,b,\sigma,p_0) \Psi(y)^2 \left  (\frac{1}{N^{2\alpha}}+  \frac{|y|^{2\alpha +2}}{N^2}  \right)  \notag\\
  & \le C(C_g,T ,b,\sigma,p_0)\Psi(x+\delta)^2 \left  (\frac{1}{N^{2\alpha}} +  \frac{|x+\delta|^{2\alpha +2}}{N^2} \right )\le \delta^2_1,
  \label{eq10}
  \end{align}
for any arbitrarily small $\delta_1>0,$ provided that $N$ is sufficiently large. 
  Let us now bound  $\| f^N(w,Y^N_w,Z^N_w)-  f(w,Y_w,Z_w)\|$. Using again Lemma \ref{lem1} yields to 
\begin{align}
&\less  \| f^N(w,Y^N_w,Z^N_w)-  f(w,Y_w,Z_w)\| \notag\\
 \le&  \| f^N(w,Y^N_w,Z^N_w)-  f^N(w,Y_w,Z_w) \|  +  \| f^N(w,Y_w,Z_w)-  f(w,Y_w,Z_w)\|   \notag\\
 \le&   L_f (\| Y^N_w-Y_w\| + \|Z^N_w -Z_w \|  + \tfrac{2}{N}+  \|b_N(Y_w)- Y_w\| +  \|b_N(Z_w)- Z_w\|). \label{eq8}
\end{align}
Then, plugging \eqref{eq10} and \eqref{eq8} into \eqref{eq9} gives
       \begin{align}\label{eq13}
      S(r)  \le  &  \frac{C(T,\kappa_2,\sigma)\delta_1}{\sqrt{T-r}} +    C(T,\kappa_2,\sigma)   L_f \int_r^T    \frac{S(w)}{\sqrt{w-r}}  dw \notag\\&
      \quad+   C(T,\kappa_2 ,\sigma)   L_f \int_r^T    \frac{\tfrac{1}{N}  +  \|b_N(Y_w)- Y_w\|   +  \|b_N(Z_w)- Z_w\|}{\sqrt{w-r}}  dw.
      \end{align}
     To estimate  $ \|b_N(Z_w)- Z_w\|    $ we use    $ Z_w =  \sigma(w,X_w) u_x(w,X_w) $ and choose  a small $a >0$ such that $\beta:=\frac{(2+a)(1-\alpha)}{2} <1.$ Then
      \begin{align*}
      \|b_N(Z_w)- Z_w \|^2 = \e _{s,y}  |b_N(Z_w)- Z_w |^2 \ind_{|Z_w| \ge N} \le \frac{  \e _{s,y}| Z_w |^{2+a}  }{N^a} = \frac{ \e _{s,y} |\sigma(w,X_w)  u_x(w,X_w) |^{2+a}  }{N^a}.
\end{align*}
Using  Theorem \ref{thm1} (ii-c) yields
\begin{align}
    \e _{s,y}  |b_N(Z_w)- Z_w |^2  &\le    \frac{
                                     C(c^{2}_{\ref{thm1}},\sigma) \e_{s,y}
                                     \Psi(X_w)^{(2+a)}}{(T-w)^{\frac{(2+a)(1-\alpha)}{2}}
                                     N^a } \le \frac{C(T, p_0,
                                     c^{2}_{\ref{thm1}},\sigma,b)  \, \Psi(y)^{(2+a)}}{(T-w)^{\frac{(2+a)(1-\alpha)}{2}} N^a} \notag \\
     &\le  \frac{\delta_1}{(T-w)^{\frac{(2+a)(1-\alpha)}{2}}} , \quad   \forall  (s,y) \in  (t-\delta, t+ \delta)\times (x-\delta, x+ \delta). \label{eq12}
   \end{align}
   Similarly,
     \begin{align}\label{eq11}
     \e _{s,y}  |b_N(Y_w)- Y_w |^2   &\le   \frac{C(T,
                                       p_0,c^{1}_{\ref{thm1}} ,b,\sigma) \, \Psi(y)^{(2+a)}}{ N^a}  \le  \delta_1 , \quad   \forall  (s,y) \in  (t-\delta, t+ \delta)\times (x-\delta, x+ \delta).
   \end{align}
Plugging \eqref{eq12} and \eqref{eq11} into \eqref{eq13} gives
\begin{align*}
  S(r) & \le \frac{C(T,\kappa_2)\delta_1}{\sqrt{T-r}} +      C(T,\kappa_2)   L_f \int_r^T    \frac{S(w)}{\sqrt{w-r}}  dw   \\ &\quad +  C(T,\kappa_2)   L_f \int_r^T    \frac{\tfrac{1}{N}  +  \delta_1 }{\sqrt{w-r}} +\frac{\delta_1}{(T-w)^{\frac{(2+a)(1-\alpha)}{2}}\sqrt{w-r}} dw\\
  &\le  C(T,\kappa_2,L_f)   \left ( \frac{\delta_1}{\sqrt{T-r}} +       \int_r^T    \frac{S(w)}{\sqrt{w-r}}  dw \right ),
\end{align*}

where the last inequality comes from Lemma \ref{beta-function} ($\beta < 1$).
It remains to apply a version of Gronwall's Lemma (see e.g. \cite[Lemma 3.1]{SY_09}) to see that $ S(r) \le \frac{C(T,\kappa_2,L_f)\delta_1}{\sqrt{T-r}}.$  Since $C(T,\kappa_2,L_f)\delta_1$ becomes arbitrarily small for $N$ large, we will slightly abuse the notation and write $S(r) \le \frac{\delta_1}{\sqrt{T-r}}.$
   \end{proof}

   \appendix
   \section{Technical results and estimates}

\begin{lemme}\label{B-difference}
For all $0 \leq k \leq m \leq n$ and $p > 0$, it holds for $h = \tfrac{T}{n}$ that
\begin{enumerate}[(i)]
\item $\e{\tau_k}  = kh$, \label{tauk-expectation}
\item $\e |\tau_1 |^p \le C(p) h^p$, \label{tau1-expectation}
\item $\e | B_{\tau_m}-B_{\tau_k} |^2  = t_m- t_k,$ \label{sigma-expectation}
\item $\e | B_{\tau_k} - B_{t_k}|^{2p} \le C(p) \e |\tau_k - t_k|^p \le C(p) (t_k h)^{\frac{p}{2}}. \label{B-difference-estimatep}$
\end{enumerate}
\end{lemme}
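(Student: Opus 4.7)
For (i), the optional stopping theorem applied to the martingale $B_t^2 - t$ at $\tau_1 \wedge n$ gives, after letting $n \to \infty$ (using the Laplace transform quoted in the paper to ensure $\E\tau_1 < \infty$), $\E\tau_1 = \E B_{\tau_1}^2 = h$. Then $\E\tau_k = kh$ follows from the i.i.d.\ structure of the increments $\tau_j - \tau_{j-1}$. For (ii), Brownian scaling gives $\tau_1 \stackrel{d}{=} h\,\tau^\ast$ with $\tau^\ast := \inf\{t>0 : |B_t|=1\}$ having moments of all orders (e.g.\ from $\E e^{-\lambda \tau^\ast} = 1/\cosh\sqrt{2\lambda}$), hence $\E|\tau_1|^p \le C(p)h^p$. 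For (iii), the increments $B_{\tau_j}-B_{\tau_{j-1}}$ are i.i.d.\ centered with second moment $h$, so the telescoping sum gives $\E|B_{\tau_m}-B_{\tau_k}|^2 = (m-k)h = t_m-t_k$.

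For the first inequality in (iv), I would set $S := \tau_k \wedge t_k$ (a bounded stopping time) and $R := |\tau_k - t_k|$. By the strong Markov property, $B'_s := B_{S+s} - B_S$ is a Brownian motion independent of $\cf_S$, and $R$ is a stopping time for the filtration $(\cf_S \vee \sigma(B'_u: u\le s))_{s\ge 0}$: on $\{\tau_k \le t_k\}$, $R = t_k - \tau_k$ is $\cf_S$-measurable, while on $\{\tau_k > t_k\}$, $R = \tau_k - t_k$ is a stopping time of the shifted Brownian motion. Then $|B_{\tau_k}-B_{t_k}| = |B'_R|$, and the Burkholder--Davis--Gundy inequality applied to $B'$ (with $\langle B'\rangle_R = R$) yields $\E|B'_R|^{2p} \le C(p)\,\E R^p$.

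For the second inequality, I would write $\tau_k - t_k = \sum_{j=1}^k (\sigma_j - h)$ with $\sigma_j := \tau_j - \tau_{j-1}$ i.i.d., centered by (i), and with $p$-th moments of order $h^p$ by (ii). For $0 < p \le 2$, Jensen combined with $\mathrm{Var}(\tau_k) \le k\E\sigma_1^2 \le Ckh^2$ gives $\E|\tau_k - t_k|^p \le C(kh^2)^{p/2} = C(t_kh)^{p/2}$. For $p > 2$, Rosenthal's inequality yields
\[
\E|\tau_k - t_k|^p \le C(p)\bigl[k^{p/2}(\E(\sigma_1-h)^2)^{p/2} + k\,\E|\sigma_1-h|^p\bigr] \le C(p)\bigl(k^{p/2}h^p + kh^p\bigr) \le C(p)(t_kh)^{p/2},
\]
where the last bound uses $kh^p \le k^{p/2}h^p$ for $k\ge 1$ and $p\ge 2$.

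The main obstacle I anticipate is the first inequality in (iv): one cannot naively condition on $\cf_{t_k}$ or $\cf_{\tau_k}$ since $\tau_k$ may fall on either side of $t_k$, so the clean route is to stop at $S = \tau_k \wedge t_k$ and carefully verify that $R$ is a stopping time in the enlarged filtration before invoking BDG.
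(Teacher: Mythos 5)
Your proposal is correct, and parts (i)--(iii) match the paper's argument (the paper simply cites Walsh's Proposition 11.1 for the moments of $\tau_1$ where you use optional stopping and Brownian scaling; these are equivalent). Where you genuinely diverge is in (iv). For the first inequality the paper avoids the strong Markov / enlarged-filtration construction entirely by writing
\begin{align*}
B_{\tau_k}-B_{t_k}=\int_0^{\tau_k\vee t_k}\bigl(\ind_{[0,\tau_k]}(r)-\ind_{[0,t_k]}(r)\bigr)\,dB_r ,
\end{align*}
a stochastic integral whose quadratic variation is the Lebesgue measure of the symmetric difference $[0,\tau_k]\,\Delta\,[0,t_k]$, i.e.\ exactly $|\tau_k-t_k|$; one application of BDG then gives $\E|B_{\tau_k}-B_{t_k}|^{2p}\le C(p)\,\E|\tau_k-t_k|^{p}$ with no measurability bookkeeping. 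Your route through $S=\tau_k\wedge t_k$ and the shifted Brownian motion $B'$ does work --- the verification that $R=|\tau_k-t_k|$ is a stopping time of $(\cf_S\vee\sigma(B'_u:u\le s))_s$ goes through as you sketch, since $\{\tau_k\le t_k\}\in\cf_S$ and $(t_k-\tau_k)\ind_{\{\tau_k\le t_k\}}$ is $\cf_S$-measurable --- but it is the harder way to the same estimate, and you correctly identified it as the delicate point. For the second inequality the paper uses the discrete Burkholder inequality followed by H\"older, $\E\bigl(\sum_i\eta_i^2\bigr)^{p/2}\le k^{p/2-1}\sum_i\E|\eta_i|^p$, for $p\ge 1$ and Jensen for $p<1$, whereas you use Rosenthal for $p>2$ and the variance bound plus Jensen for $p\le 2$; both yield $C(p)(t_kh)^{p/2}$ and your constants check out ($k^{p/2}h^p=(t_kh)^{p/2}$ and $kh^p\le k^{p/2}h^p$ for $p\ge2$). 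In short: same conclusions, with the paper's stochastic-integral trick in (iv) being the one simplification worth adopting.
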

\begin{proof} The strong Markov property of the Brownian motion implies that $(\tau_i -\tau_{i-1})_{i=1}^\infty$ is an i.i.d. sequence.
According to \cite[Proposition 11.1 (iii)]{Walsh}, we have that $\e \tau_1 = \frac{T}{n}$, and \eqref{tauk-expectation} follows.
Item \eqref{tau1-expectation} follows by \cite[Proposition 11.1 (iv)]{Walsh} and Jensen's inequality. To prove item \eqref{sigma-expectation}, recall that $(B_{\tau_i} - B_{\tau_{i-1}})_{i=1}^{\infty}$ is a centered i.i.d.~sequence with $\E (B_{\tau_i} - B_{\tau_{i-1}})^2 = h$, $i \geq 1$. \eqref{B-difference-estimatep}: The BDG inequality implies that for each $p > 0$,
\begin{align*}
\e | B_{\tau_k} {-} B_{t_k}|^{p} &= \E \abs{\int_{0}^{\tau_k \vee t_k} \!\!
  (\ind_{[0,\tau_k]}(r) {-} \ind_{[0,t_k]}(r)) dB_r}^{p} \!\!\\
  &\le C(p)\;\; \E \pr{\int_{0}^{\tau_k \vee t_k} \!\! \ind_{[0,\tau_k] \Delta [0,t_k]}(r) dr}^{p/2} \!\!\! = \E|\tau_k {-} t_k|^{p/2}.
\end{align*}
To prove the second inequality of \eqref{B-difference-estimatep}, a generalization of \cite[Proposition 11.1
(iv)]{Walsh}, we first assume that $p \geq 1$. Let us rewrite $\tau_k-t_k=\sum_{i=1}^k \eta_i$ where $(\eta_i)_{1\le i \le k}$ is an
i.i.d.~centered sequence of random variables distributed as $\tau_1-h$. Burkholder's and Hölder's inequalities, and finally item \eqref{tau1-expectation} yield
\begin{align*}
  \e |\tau_k  -t_k|^p  \le C(p)\; \e \Big( \sum_{i=1}^k \eta_i^2
  \Big)^{\frac{p}{2}} \le k^{\frac{p}{2}-1}\sum_{i=1}^k \e (\eta_i^p) \le C(p) (t_k h)^{\frac{p}{2}},
\end{align*}
which proves the claim for $p \geq 1$. The case $p < 1$ follows from this result by Jensen's inequality.
\end{proof}

\begin{lemme}\label{beta-function}
    For all $t \in [0,T)$ and for all $\alpha<1$, $\beta<1$  we have
    \begin{align*}
      \int_t^T \frac{1}{(T-r)^{\alpha}(r-t)^{\beta}}dr=\frac{1}{(T-t)^{\alpha+\beta-1}}B(1-\alpha,1-\beta),
    \end{align*}
    where $B$ denotes the beta function.
\end{lemme}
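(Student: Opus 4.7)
The plan is to reduce the integral to the standard beta-function form by a single affine change of variables, then invoke the definition $B(p,q)=\int_0^1 u^{p-1}(1-u)^{q-1}du$ (valid for $p,q>0$, which is exactly the content of the hypothesis $\alpha<1$, $\beta<1$).

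Concretely, I would set $u=\frac{r-t}{T-t}$, so that $r=t+u(T-t)$, $dr=(T-t)\,du$, and the endpoints $r=t,\,r=T$ transform into $u=0,\,u=1$. Under this substitution
\[
T-r = (T-t)(1-u), \qquad r-t = (T-t)u,
\]
and the integrand becomes
\[
\frac{1}{(T-r)^\alpha (r-t)^\beta}\,dr \;=\; \frac{(T-t)\,du}{(T-t)^{\alpha+\beta}(1-u)^\alpha u^\beta} \;=\; (T-t)^{1-\alpha-\beta}\, u^{-\beta}(1-u)^{-\alpha}\,du.
\]

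Integrating over $u\in(0,1)$ and pulling the constant $(T-t)^{1-\alpha-\beta}$ out of the integral gives
\[
\int_t^T \frac{dr}{(T-r)^\alpha (r-t)^\beta} = (T-t)^{1-\alpha-\beta}\int_0^1 u^{(1-\beta)-1}(1-u)^{(1-\alpha)-1}\,du = \frac{B(1-\beta,\,1-\alpha)}{(T-t)^{\alpha+\beta-1}}.
\]
Since the beta function is symmetric, $B(1-\beta,1-\alpha)=B(1-\alpha,1-\beta)$, which yields the claimed identity. There is no real obstacle: the only subtlety is verifying that the improper integral converges at both endpoints, which follows precisely from $1-\alpha>0$ and $1-\beta>0$, conditions imposed in the hypothesis.
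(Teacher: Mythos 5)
Your proof is correct: the affine substitution $u=(r-t)/(T-t)$ reduces the integral directly to the definition of the beta function, and the hypotheses $\alpha<1$, $\beta<1$ guarantee convergence at both endpoints. The paper states this lemma without proof, and your computation is exactly the standard argument that is implicitly intended.
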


\bibliographystyle{plain}

\end{document}